\numberwithin{equation}{section}
\theoremstyle{plain}
\newtheorem{maintheorem}{Theorem}
\newtheorem{theorem}{Theorem}[section]
\newtheorem{corollary}[theorem]{Corollary}
\newtheorem{proposition}[theorem]{Proposition}
\newtheorem{lemma}[theorem]{Lemma}
\newtheorem{definition}[theorem]{Definition}
\theoremstyle{remark}
\newtheorem{remark}[theorem]{Remark}
\begin{document}

\thanks{}

\author{F. Micena}
\address{
  IMC-UNIFEI Itajub\'{a}-MG, Brazil.}
\email{fpmicena82@unifei.edu.br}

\author{A. Tahzibi}
\address{ICMC-USP S\~{a}o Carlos -SP, Brazil.}
\email{tahzibi@icmc.usp.br}


\renewcommand{\subjclassname}{\textup{2000} Mathematics Subject Classification}

\date{\today}

\setcounter{tocdepth}{2}

\title{A Note on Rigidity of Anosov diffeomorphisms of the Three Torus}
\maketitle
\begin{abstract}
We consider Anosov diffeomorphisms on $\mathbb{T}^3$ such that the tangent bundle splits into three subbundles $E^s_f \oplus E^{wu}_f \oplus E^{su}_f.$ We show that if $f$ is  $C^r, r \geq 2,$  volume preserving, then $f$ is $C^1$ conjugated with its linear part $A$ if and only if the center foliation $\mathcal{F}^{wu}_f$ is absolutely continuous and the equality  $\lambda^{wu}_f(x) = \lambda^{wu}_A,$ between center Lyapunov exponents of $f$ and $A,$ holds for $m$ a.e. $x \in \mathbb{T}^3.$ We also conclude rigidity of derived from Anosov diffeomorphism, assuming an strong absolute continuity property (Uniform bounded density property) of strong stable and strong unstable foliations.

\end{abstract}

\section{Introduction}
 We consider $f: \mathbb{T}^3 \rightarrow \mathbb{T}^3$  an Anosov diffeomorfism. Denote by $A$ the linearization of $f,$ the map induced on $\mathbb{T}^3$ by the matrix with integer coefficients given by the action of $f$ on $\Pi_1(\mathbb{T}^3).$ It is known by \cite{FRANKS} that $A$ is an Anosov automorphism, and $f$ and $A$ are conjugated by a homeomorphism $h$ such that
 $$h \circ f = A \circ h.$$

 We also consider  $f$ as a partially hyperbolic diffeomorphism with splitting $T\mathbb{T}^3 = E^s_f \oplus E^{wu}_f \oplus E^{su}_f,$ where $E^s$ is the uniform contracting direction, $E^{wu}$ is the weak unstable direction and $E^{su} $ is the strong unstable direction. In partially hyperbolic setting $E^{wu}$ can be seen as the center bundle.

 From now on we consider that $f $ is $C^r, r \geq 1,$ and $m-$ preserving, where $m$ is the Lebesgue measure on $\mathbb{T}^3.$

 The main question that we  treat here is about ``minimal" conditions imposed on $f$  in order to have $h$ a $C^1-$conjugacy.   Gogolev and Guysinsky \cite{GoGu} proved that in the absence of periodic data obstruction,  there is  $C^1-$conjugacy and even $C^{r}$ if $f$ is $C^r$ (Gogolev \cite{Go2}). Previously with analogous hypothesis De la Llave in \cite{LLAVE}, proved the $C^1-$conjugacy for Anosov diffeomorphisms on $\mathbb{T}^2,$ and showed that the coincidence of periodic data is not sufficient to get $C^1$ conjugacy for Anosov diffeomorphisms on $\mathbb{T}^4.$
During preparation of this work Saghin and Yang \cite{SY} announced also some rigidity results and in particular they prove that if the Lyapunov exponents (stable, weak unstable and strong unstable) of Lebesgue almost every point coincide with the exponents of linearization, then there is a smooth conjugacy. Our result is  similar in spirit to theirs, although it is announced under different hypothesis and implies their result as a corollary (it does not mean necessarily being stronger).
We also refer to M. Poletti \cite{poletti} result which is in the same spirit.

In this note, we address some other point of views which are not mentioned in the previous works and may be interesting to be investigated more.

 By Brin, Burago and Ivanov, in \cite{BBI},  (Potrie-Hammerlindl for non absolute case) the diffomorphism   $f$ is dynamically coherent, meaning the center bundle $E^{wu}_f$ is uniquely integrable to a one dimensional foliation $\mathcal{F}^{wu}_f.$ {
 $A$ has a partially hyperbolic decomposition $T\mathbb{T}^3 = E^s_A \oplus E^{wu}_A \oplus E^{su}_A, $ moreover, by \cite{GoGu} Lemma 2, we have  $$h(\mathcal{F}^{wu}_f(x)) = \mathcal{F}^{wu}_A(h(x)) =  \{h(x)\} + E^{wu}_A.$$

If $h$ is a $C^1-$diffeomorphism, the foliation $\mathcal{F}^{wu}_f$ is absolutely continuous, meaning that the conditional measures of $m$ along the center leaves  $\mathcal{F}^{wu}_f(x)$ are absolutely continuos with respect to $Vol_{\mathcal{F}^{wu}_f(x)},$ the volume  restricted to the leaf $\mathcal{F}^{wu}_f(x).$

Here we prove that:
\begin{maintheorem} Let $f$  be  a  $C^r, r \geq 2,$  volume preserving Anosov diffeomorphism such that $T\mathbb{T}^3 = E^s_f \oplus E^{wu}_f \oplus E^{su}_f.$ Then $f$ is $C^1$ conjugated with its linear part $A$ if and only if the center foliation $\mathcal{F}^{wu}_f$ is absolutely continuous and the equality $\lambda^{wu}_f(x) = \lambda^{wu}_A,$ between center Lyapunov exponents of $f$ and $A,$ holds for $m$ a.e. $x \in \mathbb{T}^3.$
\end{maintheorem}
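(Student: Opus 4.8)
The plan is to prove the two implications separately, with essentially all the work in the ``if'' direction. For the ``only if'' direction, suppose $h$ is a $C^1$ diffeomorphism. As already noted in the introduction, the conditional measures of $m$ along the leaves of $\mathcal{F}^{wu}_f$ are then pushed, through the local charts, to absolutely continuous measures, so $\mathcal{F}^{wu}_f$ is absolutely continuous. Moreover, since $h(\mathcal{F}^{wu}_f(x)) = \{h(x)\} + E^{wu}_A$, the derivative $Dh_x$ carries $E^{wu}_f(x)$ onto $E^{wu}_A$ and conjugates the cocycle $Df|_{E^{wu}_f}$ to the constant linear expansion $DA|_{E^{wu}_A}$; comparing Birkhoff averages gives $\lambda^{wu}_f(x) = \lambda^{wu}_A$ for every $x$, in particular $m$-almost everywhere. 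This is the routine direction.

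For the ``if'' direction I would first show that $h$ is smooth along each of the three one-dimensional invariant foliations and then glue these regularities by Journ\'e's lemma. The core is the \textbf{center direction}. Disintegrating $m$ along $\mathcal{F}^{wu}_f$, the hypothesis provides conditionals $m^{c}_x$ that are absolutely continuous with densities $\rho_x$ with respect to arclength. The quasi-invariance $f_\ast m^{c}_x \propto m^{c}_{f(x)}$ yields a transfer relation for $\rho$ driven by the center Jacobian $\log\|Df|_{E^{wu}_f}\|$, whose Birkhoff average is $\lambda^{wu}_f(x)$. The equality $\lambda^{wu}_f(x) = \lambda^{wu}_A$ matches this cocycle with the constant expansion of the linear model; combined with ergodicity of $f$, this forces $\rho_x$ to be constant along leaves, so $m^{c}_x$ is proportional to arclength. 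Since $h$ sends center leaves to the affine center lines of $A$ and intertwines these normalized measures, $h$ restricted to each center leaf is an affine (hence $C^1$) diffeomorphism with derivative bounded away from $0$ and $\infty$.

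For the \textbf{strong directions} the first task is to promote the center identity to control of the remaining exponents. Center smoothness already forces the center periodic multiplier of $f$ to coincide with $\lambda^{wu}_A$ at \emph{every} periodic point, not merely almost everywhere; together with volume preservation per period this matches the sum of the stable and strong-unstable multipliers with that of $A$. To split these and match each one individually I would use that the strong stable foliation is automatically absolutely continuous and that, $m$ being an SRB measure, the strong-unstable conditionals inside the $2$-dimensional unstable leaves are absolutely continuous as well; feeding this into the density argument above (now applied to $\mathcal{F}^{s}_f$ and $\mathcal{F}^{su}_f$) promotes $h$ to an absolutely continuous conjugacy, whence $\lambda^{s}_f = \lambda^{s}_A$ and $\lambda^{su}_f = \lambda^{su}_A$. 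At that point the full periodic data of $f$ and $A$ coincide, and either the classical rigidity for absolutely continuous conjugacies (Gogolev--Guysinsky, Gogolev) or a direct Livsic plus non-stationary linearization argument gives that $h$ is $C^1$ along the strong stable and strong unstable leaves. Journ\'e's lemma, applied to the three transverse foliations with uniformly $C^1$ leaf restrictions, then yields that $h$ is globally $C^1$.

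I expect the main obstacle to be exactly this passage from the \emph{almost everywhere} center exponent identity to \emph{leafwise uniform} $C^1$ regularity in the strong directions: the exponent equality is measure-theoretic, whereas smoothness of $h$ along leaves is topological and differentiable, and an almost-everywhere identity is genuinely weaker than the periodic-data matching classically required. The absolute continuity of $\mathcal{F}^{wu}_f$ is precisely the bridge, converting the integrated exponent identity into pointwise control of the conditional densities; making this conversion uniform, ruling out a coboundary obstruction to the constancy of $\rho_x$, and separating the stable from the strong-unstable multiplier using only absolute continuity of the invariant foliations is where the real effort lies.
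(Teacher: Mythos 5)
Your outline of the center direction is essentially the paper's Claim 1, but two points need correction, one minor and one fatal.

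The minor one: the exponent identity $\lambda^{wu}_f(x)=\lambda^{wu}_A$ a.e.\ does \emph{not} force the center conditionals of $m$ to have constant density along the leaves of $\mathcal{F}^{wu}_f$, and $h$ is \emph{not} affine on center leaves. What is constant is the density of the pushed-forward measure $\nu=h_{\ast}m$ along the straight center lines of $A$ (because $DA|_{E^{wu}_A}$ is constant in Ledrappier's density formula $\rho(x)/\rho(y)=\prod_{i\ge 0} Jf^{\mathcal W}(f^{-i}x)/Jf^{\mathcal W}(f^{-i}y)$). The route is: absolute continuity plus the exponent equality give $h_m(f,\mathcal{F}^{wu}_f)=\lambda^{wu}_A=h_{\nu}(A,\mathcal{F}^{wu}_A)$, so Ledrappier's theorem applied to $(A,\nu)$ makes the $\nu$-conditionals Lebesgue on the image leaves; then $h(y)-h(x)=\int_x^y\rho\,dVol$ shows $h'=\rho$ along center leaves, which is continuous but generally nonconstant. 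Your conclusion (leafwise $C^1$) survives, your intermediate claim does not.

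The fatal one is exactly where you locate the ``real effort'': separating $\lambda^{s}$ from $\lambda^{su}$ after volume preservation has matched their sum at periodic points. Your proposed mechanism --- that absolute continuity of $\mathcal{F}^{s}_f$ and of the strong-unstable conditionals inside unstable leaves ``promotes $h$ to an absolutely continuous conjugacy'' --- cannot work: those absolute continuity properties hold automatically for \emph{every} $C^2$ Anosov diffeomorphism of this type, so they carry no rigidity information and certainly do not make $h$ absolutely continuous (otherwise every volume-preserving Anosov diffeomorphism of $\mathbb{T}^3$ would be $C^1$-rigid, which is false). The paper closes this gap with an entropy argument you are missing: absolute continuity of $\mathcal{F}^{wu}_f$ gives, via Gogolev's criterion, that $\lambda^{su}_f(p)$ is a single constant $\lambda^{su}_f$ over $Per(f)$; the Anosov closing lemma spreads the constancy of all three exponents to almost every point of \emph{every} invariant measure; Ruelle's inequality then gives $h_{\mu}(f)\le\lambda^{wu}_f+\lambda^{su}_f$ for all $\mu$ while Pesin's formula gives equality for $m$, so $m$ is the measure of maximal entropy; finally $h_{top}(f)=h_{top}(A)=\lambda^{wu}_A+\lambda^{su}_A$ together with $\lambda^{wu}_f=\lambda^{wu}_A$ forces $\lambda^{su}_f=\lambda^{su}_A$ and hence $\lambda^{s}_f=\lambda^{s}_A$. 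With the full periodic data matched, the Gogolev--Guysinsky theorem gives the $C^1$ conjugacy directly, with no need for a separate Journ\'e gluing step.
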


In the above theorem we have assumed absolute continuity of center foliation and some periodic data to obtain rigidity. We emphasize that only the absolute continuity  of $\mathcal{F}^{wu}_f(x)$ is not sufficient to ensure that $h$ is $C^1.$ In fact, in \cite{VARAO} there are examples of $C^{\infty}-$Anosov diffeomorphism, $f,$ such that $\mathcal{F}^{wu}_f$ is absolutely continuous but $f$ is not $C^1$ conjugated with its linearization $A.$

We have defined the notion of foliations with Uniform Bounded Density in \cite{MT} as follows:

\begin{definition}[\cite{MT}] We say that a foliation $\mathcal{F}$ has Uniform Bounded Density property (or UBD for short) if there is a uniform constant $K > 1,$ such that for any $\mathcal{F-}$foliated box $B$ (independent of the size of the box)
$$ K^{-1}  \leq \frac{dm_x^B}{d \widehat{\lambda_x}} \leq K$$
where $m_x^B$ is the conditional measure of volume on the leaves of $\mathcal{F}$ on $B$ and  $\widehat{\lambda_x}$  means the normalized Lebesgue measure  on the connected component  of $\mathcal{F}(x) \cap B$ wich contains $x.$
\end{definition}

In what follows we consider the derived from Anosov setting (partially hyperbolic with Anosov linearization) on $\mathbb{T}^3$. We  concentrate our hypotheses on stable and unstable foliations. In fact, it is well-known that both stable and unstable foliations of any $C^2$ partially hyperbolic diffeomorphism are absolutely continuous.

\begin{maintheorem}
Let $f: \mathbb{T}^3 \rightarrow \mathbb{T}^3 $ be a $C^r, r\geq 2, $  derived from Anosov (DA) diffeomorphism, volume preserving, with linearization $A,$ such that $T\mathbb{T}^3 = E^s_A \oplus E^{wu}_A \oplus E^{su}_A. $ Suppose that $\mathcal{F}^{su}_f$ and $\mathcal{F}^s_f$ are minimal foliations with the UBD (Uniformly Bounded Density) property. Then $f$ is Anosov and it is $C^1-$conjugated with $A.$
\end{maintheorem}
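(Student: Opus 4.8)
The plan is to use the UBD and minimality hypotheses on the two extreme foliations to pin down the stable and strong unstable Lyapunov exponents, to deduce from volume preservation that the center exponent is rigid and uniformly positive, and finally to reduce to Theorem A. First I would recall the standard structure of the DA setting: since $f$ is partially hyperbolic with Anosov linearization $A$, it is dynamically coherent and conjugate to $A$ by a H\"older homeomorphism $h$ carrying $\mathcal{F}^s_f,\mathcal{F}^{wu}_f,\mathcal{F}^{su}_f$ to the corresponding linear foliations of $A$. Write $0<|\mu_s|<1<\mu_{wu}<\mu_{su}$ for the eigenvalues of $A$, with $|\mu_s|\,\mu_{wu}\,\mu_{su}=|\det A|=1$.

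The core step is an exponent-rigidity statement extracted from UBD. I would disintegrate the invariant volume $m$ along the one-dimensional foliation $\mathcal{F}^{su}_f$. Because $E^{su}_f$ is the strongest expanding direction, $f$ has bounded distortion along $\mathcal{F}^{su}_f$-leaves, and the conditional densities of $m$ obey the usual $u$-Gibbs cocycle relation governed by the Jacobian $\log|Df|_{E^{su}_f}|$. The UBD property keeps these densities inside a fixed band $[K^{-1},K]$ on foliation boxes of \emph{arbitrary size}; feeding this into the cocycle relation over arbitrarily long leaf segments forces the Birkhoff sums of $\log|Df|_{E^{su}_f}|-c$ to stay bounded, i.e. $\log|Df|_{E^{su}_f}|$ is cohomologous to a constant $c$ through a continuous transfer function. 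Minimality of $\mathcal{F}^{su}_f$ is what globalizes this local estimate to the whole torus, and comparing the leafwise length growth with the topological growth dictated by $h$ (hence by $A$) identifies $c=\log\mu_{su}$. Applying the symmetric argument to $f^{-1}$, for which $E^s_f$ is the strongest expanding direction and $\mathcal{F}^s_f$ is minimal with UBD, shows that $\log|Df|_{E^s_f}|$ is cohomologous to the constant $\log|\mu_s|$.

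With the two extreme Jacobians cohomologous to constants, the strong unstable and stable exponents equal $\log\mu_{su}$ and $\log|\mu_s|$ for \emph{every} $f$-invariant measure. Since $|\det Df|\equiv 1$, the center exponent then equals $-\log|\mu_s|-\log\mu_{su}=\log\mu_{wu}>0$ for every invariant measure, uniformly bounded away from zero. Uniform strict positivity of the center exponent over all invariant measures upgrades to uniform expansion of $E^{wu}_f$ (via a standard subadditive, Li\~ao-type argument), so $f$ is Anosov. At this point all three Lyapunov exponents of $f$ coincide with those of $A$ for $m$-a.e. point, and $\mathcal{F}^{wu}_f$ is absolutely continuous because $\mathcal{F}^s_f$ and $\mathcal{F}^{su}_f$ are (uniformly) absolutely continuous and a Fubini/holonomy argument transfers this to the center. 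I would then invoke Theorem A (equivalently the Saghin--Yang criterion) to conclude that $h$ is a $C^1$ conjugacy.

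The main obstacle I expect is the UBD $\Rightarrow$ cohomological-triviality step: converting the \emph{size-independent} density bound into genuine cohomological triviality of the Jacobian cocycle, and correctly identifying the cohomology constant with $\log\mu_{su}$ (respectively $\log|\mu_s|$). The minimality hypothesis is indispensable here, precisely to propagate the local bounded-distortion estimate to all leaves; without it the constant could fail to be globally well defined. A secondary technical point is deriving absolute continuity of $\mathcal{F}^{wu}_f$ from that of the two extreme foliations, since $E^s_f\oplus E^{su}_f$ need not be integrable.
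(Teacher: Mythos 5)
Your overall strategy coincides with the paper's: use UBD plus minimality to force the stable and strong unstable exponents to be constant and equal to those of $A$, use volume preservation to pin down the center exponent and get uniform expansion of $E^{wu}_f$ (hence Anosov), and then reduce to Theorem A. The paper implements the first step more directly than your cohomological formulation: its Lemma on UBD foliations shows that the Birkhoff ratios $\prod_{j=0}^{n-1}|J^{\mathcal F}f(f^jx)|/|J^{\mathcal F}f(f^jy)|$ stay in a fixed band $[K^{-4},K^4]$ for $x,y$ on the same leaf and all $n$, and minimality plus continuity of $Df$ propagates this to a uniform two-sided bound for all pairs of points in $\mathbb{T}^3$; no continuous transfer function or Liv\v{s}ic-type argument is needed, and the identification of the constants with $\lambda^{su}_A$ and $\lambda^{s}_A$ is quoted from the authors' earlier work \cite{MT}. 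Likewise, the uniform bound makes the convergence $\frac1n\log|J^{u}f^n|\to\lambda^{su}_A$ uniform in $x$, so the Anosov conclusion follows from an elementary estimate rather than a subadditive argument over all invariant measures. These are differences of implementation, not of substance.

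There is, however, one step in your outline that would fail as stated: deriving absolute continuity of $\mathcal{F}^{wu}_f$ from absolute continuity of $\mathcal{F}^s_f$ and $\mathcal{F}^{su}_f$ by a ``Fubini/holonomy argument.'' Absolute continuity of the two extreme foliations holds for \emph{every} $C^2$ partially hyperbolic diffeomorphism, yet the center foliation is frequently pathological (even with atomic disintegration); the examples in \cite{VARAO} cited in the paper are Anosov diffeomorphisms of $\mathbb{T}^3$ of exactly the type considered here. So no such transfer argument can exist in general. The correct route --- and the one the paper takes --- is Gogolev's criterion (Theorem \ref{gogolev2}): $\mathcal{F}^{wu}_f$ is absolutely continuous if and only if $\lambda^{su}_f(p)$ is constant over $p\in Per(f)$. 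Since your argument has already established that $\lambda^{su}_f$ is constant on all of $\mathbb{T}^3$, the gap is repaired by invoking that criterion instead; but the holonomy argument itself should be discarded.
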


In the Anosov setting it is easy to prove the following corollary of Theorem A.
\begin{corollary} \label{anosovUBD}
 If  $f$ is Anosov and $\mathcal{F}^{wu}$ has UBD property, then $f$ and $A$ (its linearization) are $C^1-$conjugated.
\end{corollary}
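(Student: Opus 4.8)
The plan is to deduce the statement from Theorem A, for which it suffices to check its two hypotheses: that $\mathcal{F}^{wu}_f$ is absolutely continuous and that $\lambda^{wu}_f(x)=\lambda^{wu}_A$ for $m$-a.e.\ $x$. The first is immediate from the definition of the UBD property, since it asserts that on every foliated box the conditional measures of $m$ along $\mathcal{F}^{wu}_f$ have density in $[K^{-1},K]$ with respect to normalized arc length; in particular they are equivalent to arc length, so $\mathcal{F}^{wu}_f$ is absolutely continuous. Everything therefore reduces to the equality of the center Lyapunov exponents, and this is the only place where real work is needed.

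Because $f$ is a $C^2$ volume preserving Anosov diffeomorphism it is ergodic, so $\lambda^{wu}_f(x)$ equals a constant $\bar\lambda$ for $m$-a.e.\ $x$; write $\lambda_A:=\lambda^{wu}_A=\log\sigma$, where $\sigma>1$ is the weak unstable eigenvalue of $A$. I would read both numbers as exponential growth rates of center arc length. Fix a generic $x$ and a small center segment $I\subset\mathcal{F}^{wu}_f(x)$; using bounded distortion along unstable leaves (available since $f$ is $C^{2}$) and Birkhoff's theorem applied to $\log\|Df|_{E^{wu}_f}\|$, the $f$-arc length $\ell_f(f^n(I))$ grows at rate $\bar\lambda$. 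First I would obtain the inequality $\bar\lambda\ge\lambda_A$ for free from the topology: lifting to $\mathbb{R}^3$, the conjugacy satisfies $\sup_x\|\tilde h(x)-x\|<\infty$ and $\tilde h\circ\tilde f^n=\tilde A^n\circ\tilde h$, so the Euclidean chord joining the endpoints of $f^n(I)$ equals $\sigma^n$ times the fixed chord of $h(I)$ up to a bounded additive error, hence grows at rate exactly $\lambda_A$. Since arc length dominates chord length, $\bar\lambda\ge\lambda_A$.

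It remains to prove the reverse inequality $\bar\lambda\le\lambda_A$, and this is the main obstacle; it is precisely here that the UBD property, and in particular the \emph{scale independence} of the constant $K$, must be used. Geometrically, $\bar\lambda>\lambda_A$ would mean that the center leaves of $f$ accumulate arc length exponentially faster than their Euclidean chords, that is, they wind at a definite exponential rate while staying at a comparable Euclidean scale. The hard part is to show that such winding is incompatible with UBD: applying the uniform density bound to the foliated boxes $f^n(B_0)$ obtained by iterating a fixed small box, the invariance $f^n_*(m|_{B_0})=m|_{f^n(B_0)}$ together with $K$-comparability of the conditionals to arc length on \emph{both} boxes would force the center Jacobian to renormalize without exponential distortion across scales, which a winding rate $\bar\lambda-\lambda_A>0$ makes impossible; it forces the conditional densities to concentrate where a leaf returns close to itself, contradicting the uniform bound $K$. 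This is the same mechanism by which the UBD hypothesis is exploited for the strong foliations in Theorem B and in \cite{MT}. Granting $\bar\lambda\le\lambda_A$, we obtain $\bar\lambda=\lambda_A$, so the second hypothesis of Theorem A holds and the corollary follows.
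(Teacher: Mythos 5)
Your reduction is exactly the paper's: verify the two hypotheses of Theorem A, namely absolute continuity of $\mathcal{F}^{wu}_f$ (immediate from the definition of UBD, as you say) and the a.e.\ equality $\lambda^{wu}_f=\lambda^{wu}_A$. The paper then simply cites \cite{MT} for the exponent equality and stops. You instead try to prove that equality from scratch, and this is where the proposal has a genuine gap: the inequality $\bar\lambda\le\lambda_A$ is not proved but only ``granted'' after a heuristic about winding leaves and concentration of conditional densities. Nothing in that paragraph is an argument one could check; the mechanism you gesture at (iterating a foliated box and invoking scale independence of $K$) is never turned into an inequality. Since this is, by your own account, ``the only place where real work is needed,'' the proof as written is incomplete. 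Ironically, this is the easier half: once $\mathcal{F}^{wu}_f$ is absolutely continuous, Ledrappier's theorem (Theorem \ref{ledra}) gives $h_m(f,\mathcal{F}^{wu}_f)=\int\lambda^{wu}_f\,dm=\bar\lambda$, and since $h$ carries $\mathcal{F}^{wu}_f$ to $\mathcal{F}^{wu}_A$ and is at bounded distance from the identity, the leafwise (topological) entropy of $f$ along $\mathcal{F}^{wu}_f$ is that of $A$ along $\mathcal{F}^{wu}_A$, namely $\lambda^{wu}_A$; hence $\bar\lambda\le\lambda^{wu}_A$. This is essentially the content of the \cite{MT} inequality the paper quotes elsewhere (``$\Lambda^{\sigma}_f\le\lambda^{\sigma}_A$ under absolute continuity'').

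A second, smaller issue sits in the direction you call ``free.'' The chord-versus-arc comparison does give that $\ell(f^n(I))$ grows at rate at least $\lambda_A$, but to conclude $\bar\lambda\ge\lambda_A$ you need the \emph{upper} bound $\limsup\frac1n\log\ell(f^n(I))\le\bar\lambda$, i.e.\ that $\int_I\|Df^n|_{E^{wu}}\|\,d\ell$ does not outgrow the a.e.\ pointwise rate. Ordinary $C^2$ bounded distortion does not give this for forward iterates over a fixed segment (the iterated segment is long, and distortion control there is for preimages). What does give it is precisely Lemma \ref{lemmaUBD}: UBD forces $\prod_{j}\frac{|J^{\mathcal F}f(f^j x)|}{|J^{\mathcal F}f(f^j y)|}\in[K^{-4},K^4]$ uniformly in $n$ for $x,y$ on the same leaf, so the integrand is comparable to its value at one point and the arc-length rate equals $\bar\lambda$. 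So the UBD hypothesis is doing real work in the direction you treated as topological, not only in the direction you left open.
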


We would like to say that R.Var\~{a}o had proved one of our previous conjectures  which relates rigidity with UBD property of center foliation:

\begin{theorem} \cite{VARAO2}
Let $f : \mathbb{T}^3 \rightarrow \mathbb{T}^3$ be a conservative partially hyperbolic dif- feomorphism homotopic to a linear Anosov. The center foliation has the Uniform Bounded Density property if and only if f is $C^{\infty}$ conjugate to its
linearization.
\end{theorem}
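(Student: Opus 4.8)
The plan is to deduce the corollary from Theorem A, so it suffices to show that the UBD property of $\mathcal{F}^{wu}_f$ forces the two hypotheses of that theorem: absolute continuity of $\mathcal{F}^{wu}_f$ and the $m$-a.e.\ equality $\lambda^{wu}_f(x)=\lambda^{wu}_A$. The first is immediate from the definition, since the bounds $K^{-1}\le dm^B_x/d\widehat{\lambda_x}\le K$ say precisely that the conditional measures of $m$ along center leaves are equivalent to leafwise Lebesgue measure with bounded density, hence absolutely continuous. The entire content therefore lies in the exponent equality, and I would stress at the outset that this cannot follow from absolute continuity alone: the examples of \cite{VARAO} have absolutely continuous center foliation yet are not $C^1$ conjugate to $A$. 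It is thus the uniformity of the constant $K$ across foliated boxes of \emph{all} sizes that must be exploited.

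For the exponent equality I would first set up an ergodic and a conjugacy description of each side. Since $f$ is a volume preserving Anosov diffeomorphism it is ergodic, so $\lambda^{wu}_f(x)$ is constant $m$-a.e.\ and, for a typical center segment $J$ through $x$, equals the exponential growth rate $\lim_n \tfrac1n\log \ell(f^n(J))$ of its length. On the linear side, by \cite{GoGu} the conjugacy maps $\mathcal{F}^{wu}_f(x)$ onto the straight leaf $\{h(x)\}+E^{wu}_A$, and since $A$ acts on this line by multiplication by $\beta=e^{\lambda^{wu}_A}$ one has $\ell_A(h(f^n(J)))=\beta^{n}\ell_A(h(J))$, whose growth rate is exactly $\lambda^{wu}_A$. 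Consequently the two exponents coincide if and only if $h$ distorts center-leaf length subexponentially along the orbit, i.e.\ $\tfrac1n\log\big(\ell(f^n(J))/\ell_A(h(f^n(J)))\big)\to 0$.

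The remaining and main step is to control this distortion via UBD. Because the $f^n(J)$ are ever longer center segments, for each $n$ I would build a center-foliated box $B_n$ having $f^n(J)$ as central leaf and apply the UBD bound, which is valid for boxes of every size with the same $K$, to conclude that the conditional measures of $m$ on $B_n$ remain within the fixed factor $K$ of leafwise Lebesgue, uniformly in $n$. Transporting these conditionals by $h$, comparing with the uniform center-conditionals of Lebesgue on the linear leaves of $A$, and using the $\beta$-self-similarity forced by $A$-invariance of $h_\ast m$, I expect to trap the leaf-length distortion of $h$ between two constants depending only on $K$, hence subexponential; this yields $\lambda^{wu}_f=\lambda^{wu}_A$ and Theorem A then gives the $C^1$ conjugacy. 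The hard part is precisely this last transfer: UBD constrains only the $f$-side disintegration of $m$, whereas $\lambda^{wu}_A$ is read off on the $A$-side, so the scale-uniform bound must be pushed through the merely Hölder map $h$ and pinned down using the linear self-similarity. The delicate point is to exclude a slow, subexponential but unbounded drift of the transported densities that would remain compatible with absolute continuity while violating the uniform constant $K$.
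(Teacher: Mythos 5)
Your reduction to Theorem A is the same strategy the paper itself uses (via its Corollary \ref{anosovUBD}), and the observation that UBD trivially gives absolute continuity of the center foliation is correct. But there are two genuine gaps. First, the theorem is stated for a partially hyperbolic diffeomorphism \emph{homotopic} to a linear Anosov, not for an Anosov diffeomorphism; your argument begins ``since $f$ is a volume preserving Anosov diffeomorphism it is ergodic,'' which silently strengthens the hypothesis. Before Theorem A can be invoked you must show that UBD of the center foliation forces $f$ to be Anosov. The paper does this in a separate Proposition (UBD of $\mathcal{F}^c_f$ plus positivity of the center exponent yields uniform expansion of $E^c_f$, via the bounded-distortion estimate of Lemma \ref{lemmaUBD} and minimality of $\mathcal{F}^c_f$), and this step is entirely absent from your proposal.

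Second, the key identity $\lambda^{wu}_f=\lambda^{wu}_A$ is exactly the point you leave unresolved: you yourself flag that the ``last transfer'' through $h$ is the hard part and that a subexponential drift of the transported densities must be excluded, but you do not exclude it, so the central step of the argument is missing. The paper does not reprove this identity either; it cites \cite{MT}. The argument there is more elementary than your transport of conditional measures: the lift $H$ of $h$ satisfies $\|H-\mathrm{Id}\|_\infty<\infty$ and the center leaves are quasi-isometric, so the length of $f^n(J)$ and of its straight image $h(f^n(J))$ (which grows exactly like $e^{n\lambda^{wu}_A}$) differ by a bounded additive amount and hence have the same exponential growth rate; UBD then converts length growth into pointwise derivative growth through the uniform distortion bound $\prod_j |J^{\mathcal F}f(f^j x)|/|J^{\mathcal F}f(f^j y)|\in[K^{-4},K^4]$ of Lemma \ref{lemmaUBD}. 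Pushing disintegrations through the merely H\"older map $h$ and comparing with the center conditionals of $h_{\ast}m$ (which are not obviously uniform) is both harder and unnecessary. Finally, two smaller omissions: the statement is an equivalence and you do not address the (easy) converse direction, and Theorem A only yields a $C^1$ conjugacy, so the $C^\infty$ conclusion still requires the bootstrap argument of \cite{VARAO2}.
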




\section{Preliminaries}
Let $M$ be a $C^{\infty}$ Riemannian closed (compact, connected and boundaryless)  manifold.
A $C^1-$diffeomorphism $f: M \rightarrow M$ is called a partially hyperbolic diffeomorphism if the tangent bundle $TM$ admits a $Df$ invariant tangent decomposition $TM =  E^s \oplus E^c \oplus E^u$ such that all unitary vectors $v^{\sigma} \in E^{\sigma }_x, \sigma \in \{s,c,u\}$ for every $x \in M$ satisfy:

$$ ||D_x f v^s || < ||D_x f v^c || < ||D_x f v^u ||,$$
moreover,

$$||D_x f v^s || < 1 \;\mbox{and}\; ||D_x f v^u || > 1. $$



When $TM =  E^s  \oplus E^u,$ as above, then $f$ is called an Anosov diffeomorphism. However, in this paper, we are considering Anosov diffeomorphisms such that $E^u$ decomposes into two invariant subbundle  (weak and strong unstable) $E^u = E^{wu} \oplus E^{su}$ and we call $E^{wu}$ as central bundle.


\begin{definition} We say that a partially hyperbolic diffeomorphism $f: \mathbb{T}^3 \rightarrow \mathbb{T}^3$ is a derived from Anosov (DA) diffeomorphism if its linearization $A:\mathbb{T}^d \rightarrow \mathbb{T}^d$ is an Anosov diffeomorphism with three invariant subbundle. $A$ is obtained by the extension of induced action of $f$ on $\Pi_1(\mathbb{T}^3).$
\end{definition}

For our purposes, we need to exploit  conditional measures of the Lebesgue measure $m$ on center leaves of an Anosov diffeomorphis as stated in Theorem A. Let us  introduce some  basic concepts about disitegration of a measure.

Let $(M, \mu, \mathcal{B})$
be a probability space, where $M$ is a compact metric space, $\mu$ a probability
measure and B the Borelian $\sigma-$algebra. Given a partition $\mathcal{P}$ of  $M$ by measurable
sets, we associate the probability space $(P, \tilde{\mu}, \tilde{\mathcal{B}})$ by the following
way. Let $\pi: M \rightarrow \mathcal{P}$ be the canonical projection, that is, $\pi$ associates to a
point $x$ of $M$ the partition element of $\mathcal{P}$ that contains it. Then we define
$\tilde{\mu} := \pi_{\ast}\mu$ and $\tilde{\mathcal{B}} := \pi_{\ast}(\mathcal{B}).$

\begin{definition}  Given a partition $\mathcal{P}.$ A family $\{\mu_P \}_{P \in \mathcal{P}}$ is a system of
conditional measures for $\mu$ (with respect to $\mathcal{P}$) if:
\begin{enumerate}
\item given $\varphi \in
C^0(M),$  then $P \mapsto \int \varphi d\mu_P$ is measurable;
\item  $\mu_P(P) = 1,$ $\tilde{\mu}$-a.e.;
\item  if $\varphi \in C^0(M)$, then $\int_M \varphi d \mu =
\int_{\mathcal{P}} \varphi d\mu_p d\tilde{\mathcal{\mu}}.$
\end{enumerate}
\end{definition}
When it is clear which partition we are referring to, we say that the family
$\{\mu_P \}$ disintegrates the measure $\mu.$

\begin{proposition} [\cite{Ro}] Given a partition $\mathcal{P}$, if $\{\mu_P \}$ and $\{\nu_P \}$ are conditional
measures that disintegrate $\mu$ on $\mathcal{P},$ then $\mu_P = \nu_P,$ $\mathcal{\mu}$-a.e.
\end{proposition}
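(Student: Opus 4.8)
The plan is to deduce everything from the defining property (3) of a disintegration, after extending it from continuous integrands to bounded measurable ones, and then to use separability of $C^0(M)$ to pass from ``for each fixed test function'' to ``for almost every atom.'' The driving observation is elementary: for any $E\in\tilde{\mathcal B}$ (a measurable subset of the quotient $\mathcal{P}$) the function $\mathbf{1}_E\circ\pi$ is constant on each partition element $P$, with value $\mathbf{1}_E(P)$. Since property (2) gives $\mu_P(P)=1$ for $\tilde\mu$-a.e. $P$, it follows that for every $\varphi\in C^0(M)$
$$\int_M \varphi\cdot(\mathbf{1}_E\circ\pi)\, d\mu_P=\mathbf{1}_E(P)\int_M \varphi\, d\mu_P$$
for $\tilde\mu$-a.e. $P$, and identically for $\nu_P$.

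The key step is to upgrade property (3), stated only for $\varphi\in C^0(M)$, to all bounded Borel functions $\Psi$ on $M$:
$$\int_M \Psi\, d\mu=\int_{\mathcal{P}}\Big(\int_M \Psi\, d\mu_P\Big)\, d\tilde\mu(P).\qquad(\dagger)$$
I would prove $(\dagger)$ by the functional monotone class theorem. The family of bounded Borel $\Psi$ for which $P\mapsto\int\Psi\,d\mu_P$ is measurable and $(\dagger)$ holds is a vector space, contains the constants and all of $C^0(M)$ by properties (1) and (3), and is closed under monotone limits of uniformly bounded nonnegative sequences, by applying the monotone convergence theorem on the left, on each fiber, and against $\tilde\mu$ on the right. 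As $C^0(M)$ is an algebra generating the Borel $\sigma$-algebra, the theorem yields $(\dagger)$ for every bounded Borel $\Psi$, and the same argument applies to $\{\nu_P\}$.

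Applying $(\dagger)$ to $\Psi=\varphi\cdot(\mathbf{1}_E\circ\pi)$ and writing $\Phi_\mu(P):=\int\varphi\,d\mu_P$, $\Phi_\nu(P):=\int\varphi\,d\nu_P$, the identity above gives
$$\int_E \Phi_\mu\, d\tilde\mu=\int_M \varphi\cdot(\mathbf{1}_E\circ\pi)\, d\mu=\int_E \Phi_\nu\, d\tilde\mu.$$
The middle term does not refer to the chosen disintegration and $E$ is arbitrary, so, $\Phi_\mu,\Phi_\nu$ being in $L^1(\tilde\mu)$, I conclude $\Phi_\mu=\Phi_\nu$ $\tilde\mu$-a.e. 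Thus for each fixed $\varphi$ there is a full-measure set of $P$ on which $\int\varphi\,d\mu_P=\int\varphi\,d\nu_P$. To remove the dependence on $\varphi$, I use that $M$ compact metric makes $C^0(M)$ separable: fixing a countable dense $\{\varphi_n\}$ and full-measure sets $G_n$ as above, on $G=\bigcap_n G_n$ (still of full $\tilde\mu$-measure) the probability measures $\mu_P$ and $\nu_P$ agree against a dense subset of $C^0(M)$, hence against all of it, whence $\mu_P=\nu_P$ by the Riesz representation theorem. This is exactly $\mu_P=\nu_P$ for $\tilde\mu$-a.e. (equivalently $\mu$-a.e.) $P$.

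The hard part is the extension $(\dagger)$: one must propagate both the measurability of the fiber integrals and the integral identity through the monotone class simultaneously, while keeping in mind that property (2) holds only $\tilde\mu$-almost everywhere, so every fiberwise statement is understood modulo $\tilde\mu$-null sets. Once $(\dagger)$ is in hand, the separability reduction and the final Riesz uniqueness step are routine.
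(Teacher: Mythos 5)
The paper does not actually prove this proposition; it is stated with a bare citation to Rokhlin, so there is no internal argument to compare against. Your proof is correct and complete, and it is the standard functional-analytic uniqueness argument: the monotone class extension $(\dagger)$ is valid (the family of bounded Borel $\Psi$ satisfying fiber-measurability plus the integral identity is a vector space containing the multiplicative class $C^0(M)$, closed under bounded monotone limits, and $C^0(M)$ generates the Borel $\sigma$-algebra of a metric space), the test functions $\varphi\cdot(\mathbf{1}_E\circ\pi)$ are legitimately Borel precisely because the paper defines $\tilde{\mathcal{B}}=\pi_{\ast}(\mathcal{B})$, i.e.\ $E\in\tilde{\mathcal{B}}$ iff $\pi^{-1}(E)\in\mathcal{B}$, and the final separability-plus-Riesz step works since $M$ compact metric makes $C^0(M)$ separable and all finite Borel measures Radon; you also correctly intersect the countably many full-measure sets together with the set where $\mu_P(P)=\nu_P(P)=1$. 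One comparative remark worth noting: Rokhlin's own treatment develops uniqueness inside his theory of measurable partitions, using the countably generated structure (the family $\{A_i\}$ appearing later in the paper's definition of measurable partition) to identify conditional measures through a countable algebra of sets; your route bypasses that machinery entirely and shows that uniqueness requires no measurability hypothesis on $\mathcal{P}$ at all --- measurability of the partition is needed only for the \emph{existence} half (Rokhlin's disintegration theorem), while uniqueness holds for any partition that happens to admit a disintegration in the sense of the paper's Definition. This is a genuinely cleaner division of labor than the classical presentation, at the modest cost of invoking the functional monotone class theorem.
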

\begin{corollary}
 If $T : M \rightarrow M$ preserves a probability $\mu$ and the partition
$\mathcal{P},$ then $T_{\ast}\mu_P = \mu_{T(P)},$ for
$\widetilde{\mu}$-a.e.
\end{corollary}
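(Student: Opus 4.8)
The plan is to deduce the identity from the essential uniqueness of disintegrations proved in the preceding Proposition, by exhibiting the pushed-forward conditional measures as a second disintegration of $\mu$. Since $T$ preserves the partition $\mathcal{P}$ and is invertible, it descends to a bijection $\widehat{T}\colon \mathcal{P}\to\mathcal{P}$ on the quotient characterized by $\pi\circ T=\widehat{T}\circ\pi$, where $\pi\colon M\to\mathcal{P}$ is the canonical projection; note that $\widehat{T}^{-1}(Q)=T^{-1}(Q)$ as a subset of $M$. First I would record that $\widehat{T}$ preserves the factor measure $\widetilde{\mu}=\pi_\ast\mu$, which is immediate from $\widehat{T}_\ast\widetilde{\mu}=\widehat{T}_\ast\pi_\ast\mu=\pi_\ast T_\ast\mu=\pi_\ast\mu=\widetilde{\mu}$ together with $T_\ast\mu=\mu$.

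Next I would introduce the candidate family $\nu_Q:=T_\ast\mu_{\widehat{T}^{-1}(Q)}$ for $Q\in\mathcal{P}$ and verify the three defining properties of a system of conditional measures. Measurability of $Q\mapsto\int\varphi\,d\nu_Q$ follows from measurability of the original family and of $\widehat{T}$. For the normalization, $\nu_Q(Q)=\mu_{\widehat{T}^{-1}(Q)}\bigl(T^{-1}(Q)\bigr)=\mu_{\widehat{T}^{-1}(Q)}\bigl(\widehat{T}^{-1}(Q)\bigr)=1$ holds for $\widetilde{\mu}$-a.e. $Q$, the a.e. statement being transported from the original family because $\widehat{T}$ preserves $\widetilde{\mu}$. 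For the averaging identity, given $\varphi\in C^0(M)$ I would compute
\begin{align*}
\int_{\mathcal{P}}\int\varphi\,d\nu_Q\,d\widetilde{\mu}(Q)
&=\int_{\mathcal{P}}\int(\varphi\circ T)\,d\mu_{\widehat{T}^{-1}(Q)}\,d\widetilde{\mu}(Q)\\
&=\int_{\mathcal{P}}\int(\varphi\circ T)\,d\mu_{P}\,d\widetilde{\mu}(P)
=\int_M\varphi\circ T\,d\mu=\int_M\varphi\,d\mu,
\end{align*}
where the second equality is the change of index $P=\widehat{T}^{-1}(Q)$, legitimate because $\widehat{T}$ preserves $\widetilde{\mu}$, the third is property (3) for $\{\mu_P\}$, and the last uses $T_\ast\mu=\mu$.

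Having shown that $\{\nu_Q\}_{Q\in\mathcal{P}}$ also disintegrates $\mu$ over $\mathcal{P}$, the uniqueness Proposition gives $\nu_Q=\mu_Q$ for $\widetilde{\mu}$-a.e. $Q$, i.e. $T_\ast\mu_{\widehat{T}^{-1}(Q)}=\mu_Q$; substituting $Q=\widehat{T}(P)=T(P)$ yields $T_\ast\mu_P=\mu_{T(P)}$ for $\widetilde{\mu}$-a.e. $P$, as claimed. I expect the only genuinely delicate point to be the bookkeeping that ``$T$ preserves $\mathcal{P}$'' really produces a measurable quotient bijection $\widehat{T}$ preserving $\widetilde{\mu}$, so that the re-indexing $P\leftrightarrow Q$ is measure-preserving; once this is justified the three verifications are routine and uniqueness closes the argument.
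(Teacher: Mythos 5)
Your proposal is correct and follows essentially the same route as the paper: the paper's one-line proof is precisely the observation that $\{T_{\ast}\mu_P\}_{P\in\mathcal{P}}$, reindexed over the image elements, is another disintegration of $\mu$, so the uniqueness proposition forces it to agree with $\{\mu_{T(P)}\}$ almost everywhere. You have simply written out in full the verification of the three defining properties and the measure-preservation of the induced quotient map, which the paper leaves implicit.
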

\begin{proof}
It follows from the fact that $\{T_{\ast}\mu_P \}_{P \in \mathcal{P}}$ is also a disintegration of
$\mu.$
\end{proof}
\begin{definition} We say that a partition $\mathcal{P}$ is measurable (or countably
generated) with respect to $\mu$ if there exist a measurable family $\{A_i\}_{i\in \mathbb{N}}  $ and
a measurable set $F$ of full measure such that if $B \in \mathcal{P},$ then there exists a
sequence $\{B_i\},$ where $B_i \in \{A_i
, A_i^c \}$ such that $B \cap F =
\cup_{i=1}^{\infty}B_i \cap F.$
\end{definition}

\begin{theorem}[Rokhlin’s disintegration \cite{Ro}] Let $\mathcal{P}$ be a measurable partition
of a compact metric space $M$ and $\mu$ a Borelian probability. Then there
exists a disintegration by conditional measures for $\mu.$
\end{theorem}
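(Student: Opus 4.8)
The plan is to deduce the statement from the martingale convergence theorem by approximating the measurable partition $\mathcal{P}$ from below by finite partitions. Since $\mathcal{P}$ is countably generated, I would fix a generating family $\{A_i\}_{i\in\mathbb{N}}$ and the full-measure set $F$ furnished by the definition of measurability, and let $\mathcal{P}_n$ be the finite partition generated by $A_1,\dots,A_n$. Then $(\mathcal{P}_n)$ is an increasing sequence of finite partitions whose refinement recovers $\mathcal{P}$ on $F$, so that the atom through $x$ satisfies $\mathcal{P}(x)=\bigcap_n P_n(x)$ modulo $0$, where $P_n(x)$ denotes the atom of $\mathcal{P}_n$ containing $x$. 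For each finite partition the disintegration is explicit: on an atom $P$ with $\mu(P)>0$ put $\mu^{n}_{P}:=\mu(P)^{-1}\,\mu|_{P}$, which trivially satisfies the three required properties relative to $\mathcal{P}_n$.

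The second step is to pass to the limit. Fixing $\varphi\in C^{0}(M)$, I would consider the sequence of functions
\[ g_n(x) := \int \varphi \, d\mu^{n}_{P_n(x)} = \mathbb{E}[\varphi \mid \mathcal{P}_n](x), \]
which is a bounded martingale for the increasing filtration generated by the $\mathcal{P}_n$. By Doob's martingale convergence theorem $g_n(x)$ converges for $\mu$-a.e. $x$ to $\mathbb{E}[\varphi\mid\mathcal{P}](x)$. Because $M$ is a compact metric space, $C^{0}(M)$ is separable; choosing a countable dense subset $\{\varphi_k\}$ and intersecting the countably many full-measure convergence sets, I obtain a single full-measure set on which $\lim_n\int\varphi_k\,d\mu^{n}_{P_n(x)}$ exists for every $k$, and by density together with the uniform bound $\|g_n\|_\infty\le\|\varphi\|_\infty$ the limit exists for every $\varphi\in C^{0}(M)$. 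The functional $\varphi\mapsto\lim_n\int\varphi\,d\mu^{n}_{P_n(x)}$ is positive, linear and of norm one, so the Riesz representation theorem produces a Borel probability $\mu_x$; setting $\mu_P:=\mu_x$ for $x\in P$ gives a candidate family, well defined $\tilde\mu$-a.e.

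It then remains to verify the three axioms. Property (1), measurability of $P\mapsto\int\varphi\,d\mu_P$, is inherited from measurability of the martingale limit $\mathbb{E}[\varphi\mid\mathcal{P}]$. Property (3), the integration formula, follows by letting $n\to\infty$ in the exact identity $\int_M\varphi\,d\mu=\int_M g_n\,d\mu$ and invoking dominated convergence; a monotone class argument then upgrades it from continuous $\varphi$ to all bounded Borel functions. The genuinely delicate point — the step I expect to be the main obstacle — is property (2), the concentration $\mu_P(P)=1$. Here I would use that each generator $A_i$ is a union of atoms of $\mathcal{P}$, hence $\mathcal{P}$-measurable, so $\mathbb{E}[\mathbf{1}_{A_i}\mid\mathcal{P}]=\mathbf{1}_{A_i}$; comparing this with the extended integration formula applied to $\varphi=\mathbf{1}_{A_i}$ against arbitrary $\mathcal{P}$-measurable test functions forces $\mu_P(A_i)=\mathbf{1}_{A_i}(P)$ for $\tilde\mu$-a.e. $P$, i.e. $\mu_P$ charges $A_i$ fully when $P\subseteq A_i$ and not at all when $P\subseteq A_i^{c}$. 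Intersecting over the countably many $A_i$ shows $\mu_P$ is concentrated on $\bigcap_{P\subseteq A_i}A_i\cap\bigcap_{P\subseteq A_i^{c}}A_i^{c}=P$, which is exactly $\mu_P(P)=1$. Uniqueness of the resulting system is not needed for existence, being already supplied by the cited Proposition.
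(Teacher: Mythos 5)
The paper offers no proof of this statement: it is imported verbatim from Rokhlin \cite{Ro}, so there is nothing internal to compare against. Your argument is the standard modern proof of the disintegration theorem (finite partitions $\mathcal{P}_n\nearrow\mathcal{P}$, Doob martingale convergence on a countable dense subset of $C^0(M)$, Riesz representation, then verification of the three axioms), and it is correct in outline. Two small points deserve to be made explicit. First, the union of the $\mu$-null atoms of all the $\mathcal{P}_n$ is a null set, so the explicit formula $\mu^n_{P}=\mu(P)^{-1}\mu|_P$ is defined $\mu$-a.e.; this should be said before the martingale is introduced. Second, in the concentration step the identity $\bigcap_{P\subseteq A_i}A_i\cap\bigcap_{P\subseteq A_i^c}A_i^c=P$ holds only after intersecting with the full-measure set $F$ from the definition of measurability of $\mathcal{P}$, so you must also check that $\mu_P(F)=1$ for $\tilde\mu$-a.e.\ $P$; this follows from your extended integration formula applied to $\varphi=\mathbf{1}_{F^c}$, since $\int\mu_P(F^c)\,d\tilde\mu=\mu(F^c)=0$. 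With those two remarks inserted, the argument is complete and is essentially the proof one finds in standard modern references (and, in substance, the one Rokhlin's citation stands in for), so it is a legitimate self-contained replacement for the reference to \cite{Ro}.
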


Now we discuss some definitions and results concerning entropies of expanding foliations  \cite{LEDRAPPIER} and \cite{CHINESES} and the results by \cite{GoGu} and \cite{Go} about rigidity and absolutely continuous of the $\mathcal{F}^{wu}_f.$

 At 1980's Ledrappier developed a theory about entropy along  expanding foliations. Consider $f: M \rightarrow M$ a $C^1$ diffeomorphism defined on compact, connected and boundaryless $C^{\infty}-$ manifold $M.$ Suppose that $\mathcal{W}$ is an invariant expanding foliation, that is, $f(\mathcal{W}) = \mathcal{W}$ and $||Df| T\mathcal{W}|| > \lambda > 1.$

 \begin{definition} Given a manifold $M,$ a Borel measure $\mu$ and a foliation $W$ on it. We
say that the measurable partition $\xi$ is subordinate to the foliation $W$ whenever for
$\mu$ a.e. we have:
\begin{enumerate}
\item $\xi(x) \subset W_x,$ where $W_x$ is the leaf of the foliation $W,$ through $x.$
\item  $\xi(x)$ contains an open subset of a neighborhood of $x$ in $W_x$
 in the submanifold topology.
\end{enumerate}
 If $W$ is an $f-$ invariant foliation, we say that $\xi$ is increasing and subordinate to $W$ if it is
subordinate to $W$ and:
\begin{itemize}
\item $f^{-1}(\xi)$ refines $\xi.$
\item $\bigvee_{n = 0}^{+\infty} f^{-n}(\xi)$ is the partition into points.
\item The biggest $\sigma-$algebra contained in $\cap_{n =0 }^{+\infty}f^{n}(\xi)$ is the $\sigma-$algebra whose elements are
unions of $W$ manifolds.
 \end{itemize}
\end{definition}

\begin{definition} We say that $\mu_x^{\xi}$
 is the measure conditioned by the partition $\xi$ if for
every measurable $A \subset  M,$ the map $x \mapsto
\mu_x^{\xi}(A)$ is measurable with respect to the $\sigma-$algebra
generated by $\xi$ and $\mu(A) = \int_A \mu_x^{\xi}(A)d\mu(x).$
\end{definition}

\begin{lemma}[\cite{LEDRAPPIER}]\label{increasing} If $\mathcal{W}$ is an $f-$invariant expanding foliation, there exists $\xi$ an increasing and subordinated partition of $\mathcal{W}.$
\end{lemma}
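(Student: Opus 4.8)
The plan is to reproduce Ledrappier's construction: start from a coarse measurable partition subordinate to $\mathcal{W}$ coming from foliation charts, enlarge its atoms along the expanding leaves by taking forward iterates of $f$, and then verify the required properties one by one, the only delicate one being that the resulting atoms still contain open leaf-neighborhoods.

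First I would fix a finite cover of $M$ by foliation boxes for $\mathcal{W}$ and choose a finite measurable partition $\mathcal{Q}$, each of whose atoms lies in a single box, has diameter less than some small $\epsilon>0$, and has boundary with $\mu(\partial\mathcal{Q})=0$. Moreover I would arrange the boundary pieces to be (piecewise) transverse to $\mathcal{W}$ and of finite $(\dim M-1)$-measure, so that the $\rho$-neighborhood of $\partial\mathcal{Q}$ has $\mu$-measure $O(\rho)$. From $\mathcal{Q}$ I define the initial subordinate partition $\eta$ whose atom $\eta(x)$ is the connected component containing $x$ of $\mathcal{W}_x\cap\mathcal{Q}(x)$; by construction its atoms are plaques of leaf-diameter at most $\epsilon$, and distinct leaves lie in distinct atoms.

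Then I set
\[
\xi=\bigvee_{n=0}^{\infty} f^{n}\eta .
\]
The increasing property is now formal: $f\xi=\bigvee_{n\ge 1}f^{n}\eta$, so $\xi=\eta\vee f\xi$, whence $\xi$ refines $f\xi$, equivalently $f^{-1}\xi$ refines $\xi$. Condition (1) of subordination is immediate, since the atom $\xi(x)=\bigcap_{n\ge 0}f^{n}\big(\eta(f^{-n}x)\big)$ is contained in the $n=0$ term $\eta(x)\subset\mathcal{W}_x$. For the forward generation, note $\xi\ge\eta$, so $\bigvee_{n\ge 0}f^{-n}\xi\ge\bigvee_{n\ge 0}f^{-n}\eta$; since $\eta$ already separates points on different leaves, and the expansion $\|Df|T\mathcal{W}\|>\lambda>1$ forces $f^{n}$ to separate any two distinct points on a common leaf, this join is the partition into points. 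Finally, because $f^{n}\xi$ is decreasing and $f^{n}\big(\xi(f^{-n}x)\big)$ is an ever-larger plaque of $\mathcal{W}_x$, the atoms of $\bigcap_{n\ge 0}f^{n}\xi$ are leaf-saturated, giving the last requirement that the maximal $\sigma$-algebra inside $\bigcap_{n}f^{n}\xi$ consists of unions of $\mathcal{W}$-leaves.

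The main obstacle is condition (2): that $\xi(x)$ contains an open neighborhood of $x$ in $\mathcal{W}_x$ for $\mu$-a.e. $x$, i.e. that the infinite intersection does not erode the plaque to a set with empty leaf-interior. Here I would run a Borel--Cantelli argument. For $\delta>0$ let $B_{n,\delta}$ be the set of $x$ whose iterated boundary $f^{n}(\partial\eta)$ comes within leaf-distance $\delta$ of $x$; since $f^{n}$ expands leaf-distances by at least $\lambda^{n}$, membership in $B_{n,\delta}$ forces $f^{-n}x$ to lie in the $\lambda^{-n}\delta$-neighborhood of $\partial\mathcal{Q}$, whence, using $f$-invariance of $\mu$ together with the perimeter bound above, $\mu(B_{n,\delta})=O(\lambda^{-n}\delta)$, which is summable in $n$. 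By Borel--Cantelli, for a.e.\ $x$ only finitely many boundaries intrude within leaf-distance $\delta$, so the intersection $\bigcap_{n}f^{n}\big(\eta(f^{-n}x)\big)$ stabilizes and contains the leaf-ball of radius $\delta$ about $x$; letting $\delta\to 0$ along a countable sequence and discarding a null set yields (2). The care needed to choose $\mathcal{Q}$ with transverse, finite-perimeter boundaries so that the estimate $\mu(\rho\text{-nbhd of }\partial\mathcal{Q})=O(\rho)$ holds is precisely the technical heart of the argument.
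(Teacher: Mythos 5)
The paper does not prove this lemma at all: it is quoted directly from Ledrappier's work, so there is no internal argument to compare against. Your proposal reconstructs the standard Ledrappier--Strelcyn construction ($\eta$ from leafwise connected components of a finite partition with small, $\mu$-null boundary, then $\xi=\bigvee_{n\ge 0}f^{n}\eta$), and the verification of the increasing property, of subordination, of forward generation of the point partition, and the Borel--Cantelli argument for the open leaf-neighborhood condition are all the right steps and are carried out correctly in outline.

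The one place where your justification as written is not adequate is the estimate $\mu\bigl(\{x: d(x,\partial\mathcal{Q})<\rho\}\bigr)=O(\rho)$. Finiteness of the $(\dim M-1)$-dimensional measure of $\partial\mathcal{Q}$ gives such a bound when $\mu$ is Lebesgue (or has bounded density), but the lemma is stated for a general Borel measure $\mu$, and for an arbitrary invariant measure a transverse hypersurface of finite area can carry positive $\mu$-mass in every neighborhood scale. The standard repair, which is what Ledrappier does, is not geometric but measure-theoretic: cover $M$ by balls $B(x_i,r)$ and choose the radii $r$ generically in an interval $[r_0/2,r_0]$, using the fact that
$$\int_{r_0/2}^{r_0}\sum_{n\ge 0}\mu\bigl(\{y:\ |d(y,x_i)-r|<\delta\lambda^{-n}\}\bigr)\,dr\le \sum_{n\ge 0}2\delta\lambda^{-n}<\infty,$$
so that for almost every choice of $r$ the series $\sum_n\mu(B_{n,\delta})$ converges even though no individual $O(\rho)$ bound is claimed. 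With that substitution your Borel--Cantelli step goes through verbatim, and the rest of your argument is sound.
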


Let $f$ and $\xi$ be as in the above lemma, the number

$$H_{\mu}(f^{-1}\xi | \xi) = - \int \log(\mu_x^{\xi}(P_{f^{-1}\xi}(x))) d\mu(x),$$
does not depend on the increasing and subordinated partition of $\mathcal{W}.$ Here $P_{f^{-1}\xi(x)}$ is the element of $f^{-1}\xi$ that contains $x.$ The metric entropy along the  foliation $\mathcal{W}$ is defined by below
  $$h_{\mu}(f, \mathcal{W}) = H_{\mu}(f^{-1}\xi | \xi) = - \int \log(\mu_x^{\xi}(P_{f^{-1}\xi}(x))) d\mu(x). $$

Let $\mathcal{W}$ be an $f-$invariant expanding foliation. Ledrappier in \cite{LEDRAPPIER} proved the following theorem.

 \begin{theorem}[\cite{LEDRAPPIER}, Theorem 4.8]\label{ledra} Let $\mu$ be a measure without zero Lyapunov exponents. The following statements are equivalent:
 \begin{enumerate}
 \item $h_{\mu}(f, \mathcal{W})  = \int \sum_{i = 1} \lambda_i^{+}(x)dim(E^i(x)) d\mu.$
 \item the disintegration of $\mu$ with respect to  $\mathcal{W}$ is absolutely continuous.
 \end{enumerate}
 \end{theorem}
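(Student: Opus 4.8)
The plan is to read this as the characterization, along the expanding invariant foliation $\mathcal{W}$, of the invariant measures whose leafwise conditionals are of Lebesgue type, and to prove both implications by comparing the conditional measures $\mu_x^\xi$ with the Riemannian leaf volume $\vol_{\mathcal{W}}$ under the dynamics. Throughout I would fix an increasing subordinate partition $\xi$, whose existence is Lemma \ref{increasing}, abbreviate $\eta = f^{-1}\xi$ (so $\eta$ refines $\xi$ and each $\eta$-atom sits inside a $\xi$-atom, by the expanding property), and set $\Lambda(x) = \log \jac(Df|_{T_x\mathcal{W}})$. Since $\mathcal{W}$ is expanding and $\mu$ has no zero exponents, Oseledets' theorem gives $\int \Lambda\, d\mu = \int \sum_i \lambda_i^{+}(x)\,\dime(E^i(x))\, d\mu$, so the right-hand side of (1) is exactly $\int \Lambda\, d\mu$. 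The backbone of the argument is the identity $h_\mu(f,\mathcal{W}) = -\int \log \mu_x^\xi(\eta(x))\, d\mu(x)$ together with the relation $f_*\mu_x^{\eta} = \mu_{f(x)}^{\xi}$, which follows from the uniqueness of the disintegration (the Proposition and its Corollary above).

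The first step is the general upper bound $h_\mu(f,\mathcal{W}) \le \int \Lambda\, d\mu$, which needs no regularity of the conditionals. Inside a fixed atom $\xi(x)$, the partition $\eta$ cuts it into countably many subatoms $D$, and both $\mu_x^\xi$ and the normalized leaf volume assign them masses; writing the Shannon term as a relative entropy against the volume proportions $\vol_{\mathcal{W}}(D)/\vol_{\mathcal{W}}(\xi(x))$ and using nonnegativity of the Kullback--Leibler divergence gives, after integrating over atoms,
\begin{equation*}
h_\mu(f,\mathcal{W}) \le -\int \log \frac{\vol_{\mathcal{W}}(\eta(x))}{\vol_{\mathcal{W}}(\xi(x))}\, d\mu(x).
\end{equation*}
A bounded-distortion estimate along $\mathcal{W}$, applied to the iterates of the contraction $f^{-1}$ and combined with Birkhoff's theorem, identifies the right-hand side with $\int \Lambda\, d\mu$; this reduces the theorem to analyzing the equality case.

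For the implication (2) $\Rightarrow$ (1), assume $\mu_x^\xi = \rho_x\, \vol_{\mathcal{W}}|_{\xi(x)}$. Substituting the density into $-\int \log \mu_x^\xi(\eta(x))\, d\mu$ and using $f_*\mu_x^{\eta} = \mu_{f(x)}^{\xi}$ together with the change of variables for $f|_{\mathcal{W}}$, the density factors contribute $\int(\log\rho_{f(x)} - \log\rho_x)\, d\mu = 0$ by $f$-invariance of $\mu$, and what remains is exactly $\int \log \jac(Df|_{T\mathcal{W}})\, d\mu = \int \Lambda\, d\mu$. Hence equality holds.

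The converse (1) $\Rightarrow$ (2) is the hard direction and the main obstacle. Equality in the displayed inequality forces the Kullback--Leibler divergences to vanish $\mu$-a.e., so on almost every atom $\xi(x)$ the conditional $\mu_x^\xi$ distributes over the $\eta$-subatoms exactly in proportion to their leaf volumes. The key observation is that the transformation rule $f_*\mu_x^{\eta} = \mu_{f(x)}^{\xi}$ turns this single-scale matching, holding at $\mu$-almost every point, into matching at every scale: the way $\mu_x^\xi$ subdivides inside an $\eta$-subatom $\eta(y)$ is carried by $f$ to the way $\mu_{f(y)}^\xi$ subdivides over its own $\eta$-subatoms, which is again volume-proportional. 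Since $\bigvee_{n\ge 0} f^{-n}\xi$ is the partition into points, a martingale-convergence argument then upgrades agreement at all finite scales to the identity of $\mu_x^\xi$ with an absolutely continuous measure on $\xi(x)$, which is (2). The delicate points I expect to fight with are hidden in the word ``distortion'': the exponents, the Oseledets splitting, and hence the geometry and sizes of the atoms of $\xi$ are only measurable, so both the identification of the geometric term with $\int \Lambda\, d\mu$ and the scalewise matching must be carried out on Pesin--Lusin blocks of almost full measure where the expansion is uniform, with estimates controlled uniformly in $n$ to justify the martingale limit.
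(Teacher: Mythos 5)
This statement is quoted from \cite{LEDRAPPIER} (Theorem 4.8) and the paper gives no proof of it, so there is nothing internal to compare your argument against; I can only measure it against Ledrappier's own proof, which is in fact the argument you are reconstructing: the Jensen/Kullback--Leibler upper bound $h_\mu(f,\mathcal{W})\le\int\Lambda\,d\mu$, the analysis of the equality case atom by atom, and the martingale passage from agreement on $\bigvee_{n}f^{-n}\xi$ to identification of the conditionals. Your outline of (2) $\Rightarrow$ (1) and of the overall architecture of (1) $\Rightarrow$ (2) is correct and is essentially the standard route.

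There is, however, one genuine gap in the way you set up the comparison, and it is exactly the point where the equality-case analysis would fail as written. You compare $\mu_x^\xi$ against the probability vector $q_i=\vol_{\mathcal{W}}(D_i)/\vol_{\mathcal{W}}(\xi(x))$ of raw leaf-volume proportions and then invoke ``bounded distortion plus Birkhoff'' to identify $-\int\log q\,d\mu$ with $\int\Lambda\,d\mu$. But $\vol_{\mathcal{W}}(\xi(f(x)))=\int_{\eta(x)}\jac(Df|_{T\mathcal{W}})\,d\vol_{\mathcal{W}}$ is a leaf average of the Jacobian over $\eta(x)$, not its value at $x$, so $-\log q(x)$ differs from $\Lambda(x)+\log\vol_{\mathcal{W}}(\xi(x))-\log\vol_{\mathcal{W}}(\xi(f(x)))$ by an error term that is \emph{not} a coboundary; distortion only bounds it, it does not make it vanish. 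Consequently the chain of inequalities is not tight, and in the equality case you cannot conclude $\mu_x^\xi(D_i)=q_i$. Ledrappier's fix is to compare not with $\vol_{\mathcal{W}}$ but with the normalized reference measure on $\xi(x)$ whose density is $\Delta(x,\cdot)=\prod_{i\ge 0}\jac(Df|_{T\mathcal{W}})(f^{-i}x)/\jac(Df|_{T\mathcal{W}})(f^{-i}\cdot)$ (the same $\Delta$ displayed after Theorem \ref{ledra} in the paper); for this reference measure the transformation rule under $f$ is exact, the correction is an exact coboundary, equality in Lemma-type \ref{continous} comparisons forces $\mu_x^\xi$ to coincide with it on every $f^{-n}\xi$-atom, and the martingale argument then yields $\mu_x^\xi=\rho\,d\vol_{P_\xi(x)}$ with $\rho(x)/\rho(y)=\Delta(x,y)$. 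A second, smaller issue you should not wave at Birkhoff: $\log\vol_{\mathcal{W}}(\xi(x))$ (or $\log$ of the normalizing constant of the $\Delta$-measure) need not be $\mu$-integrable, so the vanishing of $\int(\psi\circ f-\psi)\,d\mu$ must be justified by the standard one-sided integrability lemma rather than by invariance alone. With those two repairs your proposal matches the source argument.
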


In the context of the previous Theorem, we have $\mu_x^{\xi} = \rho dVol_{P_{\xi}(x)}$ and $$\frac{\rho(x)}{\rho(y)} = \Delta(x,y) =
\prod_{i=0}^{+\infty}\frac{Jf^{\mathcal{W}}(f^{-i})(x)}{Jf^{\mathcal{W}}(f^{-i})(y)},$$
where $Jf^{\mathcal{W}}(x)$ is the Jacobian of $Df(x)| T\mathcal{W}(x).$
\begin{lemma}\label{continous}
When $\mathcal{W}$ is one dimensional the map $x \mapsto \rho dVol_{P_{\xi}(x)} $ is continuous.
\end{lemma}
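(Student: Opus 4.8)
The plan is to show that, with the natural normalization, the density $\rho$ is given on each leaf by the explicit infinite product and that this product defines a continuous function on all of $\mathbb{T}^3$. First I would normalize each conditional measure $\mu_x^{\xi}$ to be a probability on the partition element $P_{\xi}(x)$. Since $\mathcal{W}$ is one dimensional, $P_{\xi}(x)$ is an arc and $Vol_{P_{\xi}(x)}$ is simply arc length, so $\rho$ is an honest scalar function along the leaf. On a fixed leaf the identity $\rho(x)/\rho(y)=\Delta(x,y)$ displayed above, together with the constraint $\int_{P_{\xi}(x)}\rho\,dVol=1$, determines $\rho$ completely. Hence it suffices to prove that $(x,y)\mapsto\Delta(x,y)$ is continuous for $y$ ranging over the leaf of $x$, and that the resulting normalizing integral depends continuously on $x$.

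The analytic core is the uniform convergence of the product defining $\Delta$. Because $\mathcal{W}$ is expanding, $\|Df|T\mathcal{W}\|>\lambda>1$, so backward iterates contract uniformly along leaves: for $x,y$ on the same local leaf one has $d_{\mathcal{W}}(f^{-i}x,f^{-i}y)\le C\lambda^{-i}d_{\mathcal{W}}(x,y)$. As $\mathcal{W}$ is one dimensional, $Jf^{\mathcal{W}}(z)=\|Df(z)|T\mathcal{W}(z)\|$ is a single scalar, and since $f$ is $C^{r}$ with $r\ge 2$ and the invariant subbundle $T\mathcal{W}$ is Hölder, the function $\log Jf^{\mathcal{W}}$ is Hölder, say with exponent $\alpha$. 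Combining these estimates gives
$$\left|\log\frac{Jf^{\mathcal{W}}(f^{-i}x)}{Jf^{\mathcal{W}}(f^{-i}y)}\right|\le C'\,d_{\mathcal{W}}(f^{-i}x,f^{-i}y)^{\alpha}\le C''\,\lambda^{-i\alpha}\,d_{\mathcal{W}}(x,y)^{\alpha},$$
so the series of logarithms converges uniformly; therefore $\Delta(x,y)$ converges, is positive and bounded away from $0$ and $\infty$, and is continuous in $(x,y)$.

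To pass from a single leaf to continuity on $\mathbb{T}^3$, I would work inside a foliated box and exploit the continuous dependence of the local leaves and of the subbundle $T\mathcal{W}$ on the base point. The same estimate, applied now to points on nearby leaves and compared along their backward orbits, shows that the factors $Jf^{\mathcal{W}}(f^{-i}\cdot)$ vary continuously uniformly in $i$, so $\Delta$ is jointly continuous. The normalizing integral $\int_{P_{\xi}(x)}\Delta(x,y)^{-1}\,dVol(y)$ then depends continuously on $x$, because one-dimensionality makes $P_{\xi}(x)$ an arc whose endpoints and length vary continuously within the box. Dividing the (continuous) unnormalized density by this continuous, nonvanishing constant yields continuity of $\rho$, and hence continuity of the measure-valued map $x\mapsto\rho\,dVol_{P_{\xi}(x)}$ in the weak-$*$ topology.

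The main obstacle I expect is not the leafwise convergence, which is routine given uniform expansion and the Hölder regularity of $Jf^{\mathcal{W}}$, but rather the transverse control: one must verify that the product, defined intrinsically on each leaf, glues into a jointly continuous function, i.e. that points on nearby leaves have comparable backward orbits so that all factors vary continuously uniformly in $i$, and that the normalizing integral is continuous across leaves. Here one-dimensionality is essential, since it reduces $Vol$ to arc length and $Jf^{\mathcal{W}}$ to a single Hölder scalar, making both the product and the normalization manifestly continuous; in higher dimension the determinant of $Df|T\mathcal{W}$ and the varying shape of $P_{\xi}(x)$ would demand additional regularity of the subbundle.
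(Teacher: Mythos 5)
The paper offers no proof of this lemma at all: it simply points to Remark~2.5 of \cite{poletti}, after having displayed the density formula $\rho(x)/\rho(y)=\Delta(x,y)=\prod_{i\ge 0} Jf^{\mathcal{W}}(f^{-i}x)/Jf^{\mathcal{W}}(f^{-i}y)$. Your argument is exactly the standard proof that this citation stands in for, and its core is sound: backward iterates contract leafwise distance because $\mathcal{W}$ is expanding, $\log Jf^{\mathcal{W}}$ is H\"older since $f$ is $C^{2}$ and $T\mathcal{W}$ is a H\"older subbundle, so the series of logarithms has a geometric tail uniform over pairs $(x,y)$ on a common local plaque; combined with termwise continuity of each factor this gives joint continuity of $\Delta$, and normalization finishes the job. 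One point deserves more care than you give it: the atoms $P_{\xi}(x)$ of the Rokhlin partition produced by Lemma~\ref{increasing} are only \emph{measurable}, so their ``endpoints and length'' do \emph{not} vary continuously with $x$, and the normalizing constant $\int_{P_{\xi}(x)}\Delta(x,y)^{-1}dVol(y)$ can jump between atoms. The continuity that genuinely holds --- and the one the paper actually uses later, when it identifies $h'(y)=\rho(y)$ and extends by an Arzel\`a--Ascoli argument --- is continuity of the ratio $\Delta(x,y)$, equivalently of the densities normalized over the continuously varying plaques of a fixed foliated chart rather than over the measurable atoms $P_{\xi}(x)$. With the statement read in that (intended) sense, your proof is correct and is the same argument as in the cited reference.
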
 See \cite{poletti}, remark 2.5.


For our purposes we need to relate conjugacy and absolute continuity with the Lyapunov exponents of the diffeomorphisms.  Let us recall important results from \cite{GoGu} and \cite{Go}. The first Theorem is about $C^1$ conjugacy between Anosov maps of  $\mathbb{T}^3$ and the other one is a characterization of absolute continuity of $\mathcal{F}^{wu}_f.$

\begin{theorem}[\cite{GoGu}] \label{gogolev1} Let $f$  be  a  $C^r, r \geq 2,$ an Anosov diffeomorphism such that $T\mathbb{T}^3 = E^s_f \oplus E^{wu}_f \oplus E^{su}_f,$ and $A$ its linearization. Then $f$ and $A$  are $C^1-$ conjugated if and only if $\lambda^{\sigma}_f(p) = \lambda^{\sigma}_A, \sigma \in \{s, wu, su\},$ for any $p \in Per(f).$
 \end{theorem}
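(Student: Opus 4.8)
The plan is to prove the elementary implication directly and to obtain the substantive one by promoting the Franks topological conjugacy $h$ to a $C^1$ diffeomorphism, first along each one--dimensional invariant foliation and then globally. For the direction $(\Rightarrow)$, suppose $h$ is a $C^1$ diffeomorphism with $h\circ f=A\circ h$. Any topological conjugacy carries stable and center sets to stable and center sets, so $h$ maps $\mathcal{F}^s_f,\mathcal{F}^{wu}_f$ to $\mathcal{F}^s_A,\mathcal{F}^{wu}_A$; differentiating, $Dh$ sends $E^s_f\to E^s_A$ and $E^{wu}_f\to E^{wu}_A$, hence also $E^{su}_f\to E^{su}_A$ by complementarity inside $E^u$. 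For $p\in\mathrm{Per}(f)$ of period $n$, the linear map $Dh_p$ conjugates $Df^n_p$ to $DA^n_{h(p)}=A^n$ while respecting the three invariant lines, so the eigenvalue moduli agree direction by direction; dividing logarithms by $n$ gives $\lambda^\sigma_f(p)=\lambda^\sigma_A$ for every $\sigma\in\{s,wu,su\}$.

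For $(\Leftarrow)$ I start from the bi--H\"older conjugacy $h$ and the fact that it intertwines the three one--dimensional foliations of $f$ and $A$. \textbf{Step 1 (leafwise smoothness).} Fix $\sigma$ and set $\psi^\sigma=\log\|Df|_{E^\sigma_f}\|$, a H\"older potential on the transitive Anosov system $f$. The hypothesis says exactly that the Birkhoff sums of $\psi^\sigma-\lambda^\sigma_A$ vanish around every periodic orbit, so by Liv\v{s}ic's theorem there is a H\"older $u^\sigma$ with $\psi^\sigma-\lambda^\sigma_A=u^\sigma\circ f-u^\sigma$, and Liv\v{s}ic regularity (as in de la Llave--Marco--Moriy\'on \cite{LLAVE}) makes $u^\sigma$ of class $C^{r-1}$ along the leaves of $\mathcal{F}^\sigma_f$. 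Differentiating $h\circ f=A\circ h$ along a $\sigma$--leaf gives the formal relation $\log Dh^\sigma\circ f-\log Dh^\sigma=\lambda^\sigma_A-\psi^\sigma=-(u^\sigma\circ f-u^\sigma)$, so $\log Dh^\sigma+u^\sigma$ is $f$--invariant and hence constant by transitivity. This is made rigorous by taking $c_\sigma e^{-u^\sigma}$ as the candidate leafwise derivative, integrating it to a leaf--conjugacy, and identifying the latter with $h$ by uniqueness of the conjugacy on each leaf; one concludes that $h|_{\mathcal{F}^\sigma_f}$ is a $C^1$ (indeed $C^{r-1}$) diffeomorphism with nonvanishing, transversely H\"older derivative.

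\textbf{Step 2 (globalization).} By dynamical coherence the bundles integrate to $\mathcal{F}^s_f,\mathcal{F}^{wu}_f,\mathcal{F}^{su}_f$ and to the two--dimensional unstable foliation $\mathcal{F}^u_f$ tangent to $E^{wu}_f\oplus E^{su}_f$, all with $C^r$ leaves. Inside each unstable leaf, $\mathcal{F}^{wu}_f$ and $\mathcal{F}^{su}_f$ are transverse one--dimensional foliations along both of which $h$ is uniformly $C^1$, so Journ\'e's regularity lemma gives that $h|_{\mathcal{F}^u_f}$ is $C^1$. Applying Journ\'e's lemma a second time to the transverse pair $(\mathcal{F}^u_f,\mathcal{F}^s_f)$, which spans $T\mathbb{T}^3$, yields that $h$ is globally $C^1$, completing the proof.

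The hard part is Step 1 for the non--extremal central direction $\sigma=wu$. For the strong--stable and strong--unstable foliations the Liv\v{s}ic/transfer--function scheme is classical, but $E^{wu}_f$ is only weakly expanding, so one must use the domination $\lambda^s_A<\lambda^{wu}_A<\lambda^{su}_A$ to guarantee that the cohomological solution $u^{wu}$ is genuinely \emph{differentiable} along the center leaves (not merely continuous) and that its transverse H\"older dependence is strong enough to feed Journ\'e's lemma. A secondary technical point is verifying, at each of the two assembly steps, the uniform leafwise $C^1$ bounds and transverse continuity of the derivatives required by Journ\'e's lemma; the H\"older control on $u^\sigma$ coming from Liv\v{s}ic regularity is exactly what legitimizes both applications.
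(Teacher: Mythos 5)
Note first that the paper does not prove this statement at all: it is imported verbatim from Gogolev--Guysinsky \cite{GoGu} and used as a black box (in Theorems A, B and C), so your attempt can only be measured against the known proof in \cite{GoGu}. Your sketch does reproduce its broad architecture --- leafwise regularity of $h$ in each of the three one-dimensional directions, then two applications of Journ\'e's lemma --- and your $(\Rightarrow)$ direction and Step 2 assembly are fine. Step 1, however, contains two genuine gaps.

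First, you begin by asserting that $h$ intertwines all three one-dimensional foliations. A topological conjugacy automatically preserves $\mathcal{F}^s$ and the full two-dimensional unstable foliation (these are characterized topologically by asymptotic behavior), and by a nontrivial lemma of \cite{GoGu} (the one quoted in this paper as $h(\mathcal{F}^{wu}_f(x)) = \mathcal{F}^{wu}_A(h(x))$) it also preserves $\mathcal{F}^{wu}$; but the strong unstable foliation has no topological characterization, and $h(\mathcal{F}^{su}_f)=\mathcal{F}^{su}_A$ is itself one of the delicate points --- in the paper's proof of Theorem C, as in \cite{GoGu}, it is deduced only \emph{after} $h$ has been shown to be $C^1$ along the weak direction. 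So your Step 1 for $\sigma=su$ cannot start as written. Second, your mechanism for identifying $h$ with the integral of the candidate derivative $c_\sigma e^{-u^\sigma}$, namely ``uniqueness of the conjugacy on each leaf,'' is false: conjugacies between one-dimensional expanding maps are wildly non-unique (any homeomorphism $\phi$ of $\mathbb{R}$ with $\phi(e^{\lambda}t)=e^{\lambda}\phi(t)$, even one fixing the origin, is determined only up to an arbitrary homeomorphism of a fundamental domain), so nothing forces your integrated leaf map to coincide with $h$. The correct route --- de la Llave's argument \cite{LLAVE} for the strong directions, and the harder argument of \cite{GoGu} for the weak one --- proves differentiability of $h$ itself via distortion estimates anchored at periodic points, using exactly the Liv\v{s}ic-type bounded-distortion control you derive, but as an ingredient in showing that a limit such as $\lim_{n}(A^{-n}\circ h\circ f^{n})$ converges in $C^1$ along leaves, not via any uniqueness statement. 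The weak-unstable case, which you correctly flag as the hard part, is precisely where \cite{GoGu} needed ideas (affine structures on weak leaves together with the foliation-preservation lemma above) that your outline postulates rather than supplies.
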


\begin{theorem}[\cite{Go}] \label{gogolev2} Suppose $f$ and $A$ as in Theorem A. The center foliation $\mathcal{F}^{wu}_f$ is absolutely continuous if and only if there is a real  number $\lambda^{su}_f,$ such that $\lambda^{su}_f(p) =  \lambda^{su}_f,$ for any  $p \in Per(f).$
 \end{theorem}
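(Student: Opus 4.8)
The plan is to reduce the absolute continuity of $\mathcal{F}^{wu}_f$ to an entropy identity via Ledrappier's criterion (Theorem \ref{ledra}) and then to translate that identity into a statement about the strong unstable periodic data by means of the Liv\v{s}ic theorem. Since the center bundle $E^{wu}_f$ is expanding, Theorem \ref{ledra} applies to $\mathcal{W}=\mathcal{F}^{wu}_f$ and shows that the disintegration of $m$ along center leaves is absolutely continuous if and only if
\[ h_m(f,\mathcal{F}^{wu}_f)=\int \lambda^{wu}_f(x)\,dm(x).\]
In the absolutely continuous case the conditional density $\rho$ along each center leaf is the infinite product $\Delta(x,y)=\prod_{i\ge 0} J^{wu}f(f^{-i}x)/J^{wu}f(f^{-i}y)$, where $J^{wu}f$ denotes the Jacobian of $Df$ along $E^{wu}_f$; this product converges for $x,y$ on a common center leaf (backward weak contraction plus H\"older regularity) and is continuous by Lemma \ref{continous}. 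The subtle point is that this leafwise density \emph{always} exists, so the real content is whether it is the genuine disintegration: when the entropy falls short of $\int\lambda^{wu}_f\,dm$ the true conditionals are singular despite the formal density being present. For this reason I would rely on the entropy equality rather than on a direct transverse holonomy estimate, which for the weak (non-strong) foliation is obstructed by the usual Fubini pathologies.

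For the implication ``constant periodic data $\Rightarrow$ absolute continuity'' I would start from $\varphi=\log J^{su}f$, which is H\"older. The hypothesis $\lambda^{su}_f(p)=\lambda^{su}_f$ for all $p\in\mathrm{Per}(f)$ says exactly that the Birkhoff sums of $\varphi-\lambda^{su}_f$ vanish over every periodic orbit, so, $f$ being a transitive Anosov diffeomorphism, the Liv\v{s}ic theorem supplies a H\"older $\psi$ with
\[ \log J^{su}f=\lambda^{su}_f+\psi\circ f-\psi.\]
Feeding this coboundary into the analogue of the product formula for the strong unstable conditionals (which are absolutely continuous because $\mathcal{F}^{su}_f$ is a strong foliation) makes them telescope to the flat form $e^{\psi}$ times a uniform density, independent of the cocycle's oscillation. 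I would then combine this rigidity with the fact that $m$ is the SRB measure, so its conditionals along the full unstable foliation $\mathcal{F}^u_f$ are absolutely continuous with continuous densities, and run a Fubini argument inside the two dimensional unstable leaves: the flattening of the strong unstable transverse measure is what lets one verify $h_m(f,\mathcal{F}^{wu}_f)=\int\lambda^{wu}_f\,dm$, hence absolute continuity of the center foliation through Theorem \ref{ledra}.

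For the converse I would argue at the level of periodic orbits. Assuming $\mathcal{F}^{wu}_f$ absolutely continuous, fix $p\in\mathrm{Per}(f)$ of period $k$; its center leaf is $f^k$-invariant and carries the continuous density $\rho$ above. Comparing the self-consistency of this absolutely continuous center disintegration under the return map $f^k$ with the absolutely continuous disintegration along strong unstable leaves and with the smooth unstable density of $m$ forces the Birkhoff sum $\sum_{i=0}^{k-1}\log J^{su}f(f^i p)$ to equal $k$ times a constant independent of $p$. Equivalently, the entropy equality of Theorem \ref{ledra} forces $\log J^{su}f$ to be cohomologous to a constant, and evaluating that coboundary relation on periodic orbits yields $\lambda^{su}_f(p)=\lambda^{su}_f$ for every $p$.

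The main obstacle, shared by both directions, is precisely the equivalence between the Ledrappier entropy equality for $\mathcal{F}^{wu}_f$ and the cohomological triviality of $\log J^{su}f$. The difficulty is genuinely measure-theoretic: inside a two dimensional unstable leaf the splitting $E^{wu}_f\oplus E^{su}_f$ is only H\"older, so naive transverse Fubini or holonomy estimates do not decide absolute continuity, and one must instead control the conditional entropy of the $su$-direction given $\mathcal{F}^{wu}_f$ and its deficit against $\int\lambda^{su}_f\,dm$. Showing rigorously that the telescoping produced by the coboundary removes exactly this entropy deficit is where the bulk of the work lies; the conversion between the coboundary condition and constant periodic data is then the comparatively soft Liv\v{s}ic step.
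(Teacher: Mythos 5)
First, a point of reference: the paper does not prove Theorem \ref{gogolev2} at all --- it is imported verbatim from Gogolev \cite{Go} --- so there is no internal proof to compare your attempt against; it has to be measured against the argument in \cite{Go}.

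Your outline assembles the right ingredients (Ledrappier's criterion, Theorem \ref{ledra}; the product formula for leafwise densities; absolute continuity of the strong foliations; Liv\v{s}ic), but in both directions the step that actually carries the theorem is asserted rather than proved, and you acknowledge this yourself (``where the bulk of the work lies''). In the direction ``constant periodic data $\Rightarrow$ absolute continuity,'' the passage from the coboundary $\log J^{su}f=\lambda^{su}_f+\psi\circ f-\psi$ to the entropy identity $h_m(f,\mathcal{F}^{wu}_f)=\int\lambda^{wu}_f\,dm$ \emph{is} the theorem; calling it ``a Fubini argument inside the unstable leaves'' does not supply a mechanism, and the naive mechanism fails for exactly the reason you flag: the formal Jacobian of the $\mathcal{F}^{wu}_f$-holonomy between strong unstable transversals converges pointwise whether or not the foliation is absolutely continuous, because the domination goes the wrong way ($E^{su}$ is contracted faster than $E^{wu}$ under $f^{-1}$), so pointwise convergence of that product decides nothing. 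What makes the coboundary useful in \cite{Go} is not an entropy computation but a regularity statement: it produces an $f$-equivariant affine parametrization of the strong unstable leaves, with respect to which the weak-unstable holonomies inside two-dimensional unstable leaves intertwine affine maps and are therefore uniformly Lipschitz; absolute continuity of $\mathcal{F}^{wu}_f$ then follows by combining this with the (always valid) absolute continuity of $\mathcal{F}^{u}_f$ and $\mathcal{F}^{s}_f$. None of this is visible in your sketch.

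The converse direction is where the gap is sharpest. Your proposed argument --- comparing the ``self-consistency'' of the center disintegration under the return map $f^k$ at a single periodic orbit --- cannot work: at a periodic point the absolutely continuous weak-unstable conditional has density given by the product of $J^{wu}f$ along the orbit, and this is a fixed point of the relevant transfer operator for \emph{any} values of $J^{su}f$; no constraint on $\sum_{i=0}^{k-1}\log J^{su}f(f^ip)$ is produced, and the claim that it ``forces'' the Birkhoff sums to be constant is unsupported. The actual proof is by contraposition: given two periodic points with $\lambda^{su}(p)\neq\lambda^{su}(q)$, one shows by a Pesin-type shadowing argument that the weak-unstable holonomies between strong unstable transversals distort length unboundedly along orbits that spend long stretches near $p$ and then near $q$, and concludes that the conditional measures of $m$ on weak leaves are singular. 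So while your high-level framing (Ledrappier plus Liv\v{s}ic) points in a sensible direction, the proposal as written is a restatement of the theorem in entropy language rather than a proof of it.
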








\section{Proof of Theorem A}

\begin{proof}

For the proof of Theorem A, we need  to recall the following  closing lemma.

\begin{lemma}[Anosov Closing Lemma] \label{closing}
Let $f: M \rightarrow M$ be a $C^{1+\alpha}$ diffeomorphism preserving a hyperbolic Borel probability measure. For all $\delta > 0 $ and $\epsilon > 0$ there exists $\beta= \beta(\delta, \epsilon) >0$ such that if $x, f^{n(x)}(x) \in \Delta_{\delta}$ (Pesin block) for some $n(x) >0$ and $d(x, f^{n(x)}(x)) < \beta$ then there exists a hyperbolic periodic point of period $n(x)$, $z$ with $d(f^k(x), f^k(z)) \leq \epsilon$ for all $0 \leq k \leq n(x)-1.$
 \end{lemma}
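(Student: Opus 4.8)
The plan is to realize the almost-closed orbit segment of $x$ as a genuine periodic orbit by a contraction-mapping (graph transform) argument carried out in Lyapunov-adapted coordinates. First I would set up the Pesin structure. On the block $\Delta_\delta$ the invariant splitting $E^s \oplus E^u$ varies continuously, the angle between $E^s$ and $E^u$ is bounded below, and the local stable and unstable manifolds $W^s_{loc},W^u_{loc}$ have uniform size $\rho=\rho(\delta)>0$. Along the orbit $x_k=f^k(x)$, $0\le k\le n$, I would introduce the Lyapunov (adapted) inner product, in which $Df$ contracts $E^s$ by a factor $\le e^{-\chi+\epsilon_0}$ and expands $E^u$ by $\ge e^{\chi-\epsilon_0}$, where $\chi>0$ is the spectral gap and $\epsilon_0$ is small, at the cost of a tempered comparison $C(x_k)\le C(x_0)e^{\epsilon_0 k}$ with the Riemannian metric. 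Since the endpoints $x_0$ and $x_n$ lie in $\Delta_\delta$, the charts at $0$ and $n$ have uniform size and the identifications there are uniformly bi-Lipschitz.

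Next I would reformulate the conclusion as a fixed-point problem. Writing $f$ in the exponential charts centered at the $x_k$, it becomes $F_k=Df(x_k)+R_k$, where the remainder satisfies $\|R_k(v)\|\le c\,\|v\|^{1+\alpha}$ because $f$ is $C^{1+\alpha}$. I seek a section $v=(v_k)_{0\le k\le n}$, with $v_k$ small in $E^s(x_k)\oplus E^u(x_k)$, such that the points $z_k=\exp_{x_k}(v_k)$ satisfy $f(z_k)=z_{k+1}$ for $0\le k<n$ together with the closing condition $z_n=z_0$; such a section yields a periodic point $z=z_0$ of period $n$. Because $d(x_n,x_0)<\beta$ with both endpoints in the block, the closing constraint contributes an inhomogeneous term of size $O(\beta)$.

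I would then solve this by the standard hyperbolic fixed-point scheme: split each correction into stable and unstable components, propagate the stable part forward and the unstable part backward, and use that forward contraction on $E^s$ and backward contraction on $E^u$ make the associated operator a contraction in the Lyapunov-weighted supremum norm, with contraction factor bounded away from $1$ once $\epsilon_0$ is chosen small. The Banach fixed-point theorem produces a unique small section, hence the periodic point $z$. Finally I would read off the quantitative shadowing: the fixed section has Lyapunov-norm $O(\beta)$, so passing back to the Riemannian metric and using the uniform chart size at the endpoints gives $d(f^k(x),f^k(z))\le\epsilon$ for all $0\le k\le n-1$, provided $\beta=\beta(\delta,\epsilon)$ is small. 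Hyperbolicity of $z$ follows because its whole orbit stays in a thin tube around the orbit of $x$, where the invariant cone fields of the Lyapunov structure are preserved, so the cone criterion certifies that $z$ is a hyperbolic periodic point.

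The main obstacle is the nonuniformity: only the endpoints $x_0,x_n$ are assumed to lie in $\Delta_\delta$, while the intermediate iterates may leave the block, so one cannot invoke uniform manifold sizes or uniform hyperbolic rates along the whole segment. The device that overcomes this is the tempered Lyapunov metric, in which the hyperbolic estimates hold at every point of the orbit with only a slowly growing $e^{\epsilon_0 k}$ distortion relative to the Riemannian metric. The challenge is to ensure that this accumulated distortion does not swamp the $O(\beta)$ closing error, which is precisely what forces $\beta$ to depend on $\delta$ (through $\rho(\delta)$ and the tempering constant) and dictates the order in which the constants are fixed.
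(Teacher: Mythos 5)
The paper offers no proof of this lemma: it is recalled verbatim as the classical Katok closing lemma from Pesin theory (cf.\ the Pollicott reference \cite{POLL}), so there is no in-paper argument to compare against. Your proposal is, in outline, exactly the standard Katok proof --- Lyapunov charts along the orbit segment, $f$ written in charts as $Df(x_k)$ plus a $C^{1+\alpha}$-small remainder, the closing condition turned into a fixed-point problem for a sequence $(v_k)$ solved by propagating stable components forward and unstable components backward in a weighted sup norm, and hyperbolicity of the resulting periodic point via invariant cones --- and the architecture is sound, including the correct identification of the real difficulty (only the endpoints lie in $\Delta_\delta$) and the correct device (tempered Lyapunov metrics anchored at the two endpoints).

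One step in your final paragraph is stated too loosely to close the estimate as written: a uniform bound $\|v_k\|_{\mathrm{Lyap}} = O(\beta)$ for all $k$ does \emph{not} by itself give $d(f^k(x), f^k(z)) \leq \epsilon$ at intermediate times, because converting from the Lyapunov norm to the Riemannian metric at time $k$ costs a comparison factor that can grow like $e^{\epsilon_0 \min(k,\, n-k)}$ (anchoring forward from time $0$ and backward from time $n$), which is unbounded in $n$. What rescues the argument --- and what you should make explicit --- is that the inhomogeneity in your fixed-point equation sits only at the boundary (the $O(\beta)$ closing gap), so the solution actually decays exponentially away from the endpoints, $\|v_k\|_{\mathrm{Lyap}} \leq C\beta\, e^{-(\chi - \epsilon_0)\min(k,\, n-k)}$; choosing $\epsilon_0 < \chi/2$, this decay beats the tempered growth of the comparison constants and yields $d(f^k(x), f^k(z)) \leq C'\beta$ uniformly in $k$ and $n$. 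With that decay estimate inserted, your sketch is a correct rendition of the standard proof.
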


 Note that, for the Anosov diffeomorphism $f$ as in Theorem A, we may take (a.e)  $\Lambda_{\delta} = \mathbb{T}^3$ for some $\delta>0.$

By Lemma \ref{increasing}, let $\mathcal{\xi}$ be an increasing partition subordinated to $\mathcal{F}^{wu}_f.$ The conjugacy $h$ sends $\xi$ to a partition $h(\xi) $ increasing and subordinated to $\mathcal{F}^{wu}_A,$ it is because $h$ is homeomorphism and $h(\mathcal{F}^{wu}_f) = \mathcal{F}^{wu}_A,$ see for instance \cite{GoGu}.

Now, since $\mathcal{F}^{wu}_f$ is absolutely continuous, the disintegration of $m$ along $\mathcal{F}^{wu}_f$ has absolutely continuous conditional measures. Then,
$$h_{m}(f, \mathcal{F}^{wu}_f ) = \int \lambda^{wu}_f(x)dm = \lambda^{wu}_A. $$
Consider $\nu$ the induced measure $\nu := h_{\ast}(m).$ Since $h(\mathcal{F}^{wu}_f) = \mathcal{F}^{wu}_A,$ then $h$ conjugates $f$ and $A,$ along the center manifolds, so
$$h_{\nu}(A, \mathcal{F}^{wu}_A ) = h_{m}(f, \mathcal{F}^{wu}_f )=  \lambda^{wu}_A= \int \lambda^{wu}_A(x)d\nu.$$
In fact, every $x\in \mathbb{T}^3$ is  a regular point (Lyapunov exponents exist.) for $A$ and the Lyapunov exponents of $A$ are constant for any $x.$ By Ledrappier formula $\nu$ disintegrates absolutely continuous on each atom of $h(\xi).$  Moreover, since the derivative of $A$ is constant along to $E^{wu}_A,$ by formula in Theorem \ref{ledra}, the densities  of $\nu$ along $\mathcal{F}^{wu}_A$ are constant.  Since $h $ is a homeomorphism, and $supp(m) = \mathbb{T}^3,$ then $supp(\nu) = \mathbb{T}^3.$

\textit{Claim 1: $h$ is $C^{1}$ restricted to the center foliation $\mathcal{F}^{wu}_f.$}

Let $m_x^{\xi}$ be each conditional measure of $m$ along the atom $P_{\xi}(x),$ and $$\rho(x) : = \rho^{\xi}(x)= \frac{d m_x^{\xi}}{d Vol_{P_{\xi}(x)}},$$ the density of $m$  along $P_{\xi}(x),$ where $Vol_{P_{\xi}(x)}$ denotes the normalized Lebesgue measure on $P_{\xi}(x).$

By uniqueness of the Rokhlin disintegration, we have $h_{\ast}(m_x^{\xi}) = \nu^{h(\xi)}_{h(x)}. $ By continuity in  Lemma \ref{continous}, we have
$$\frac{d \nu^{h(\xi)}_x}{d Vol_{h(P_{\xi}(x))}} = Const = 1. $$

 So, if $[x, y]^{wu}$ is a segment in $P_{\xi}(\theta),$ the image $[h(x), h(y)]$ is a segment in $\{h(x)\} + E^{wu}_A. $ Choosing a convenient orientation, we have

$$ h(y) - h(x) = \int_x^y \rho dVol_{[x, y]^{wu}}.$$
It is because, in local charts,  the value $h(y) -  h(x) $  is the length of $[h([x, y]^{wu})]$ as an interval of $\mathbb{R}.$  So $h$ is differentiable on $[x, y]^{wu}$  for  $m$ a.e. $x \in \mathbb{T}^ 3,$ moreover
$$\int_x^y h'(\theta) dVol_{[x,y]^{wu}} =  \int_x^y \rho dVol_{[x, y]^{wu}},$$ for any $y \in {P_{\xi}(x)}.$ The notation $h'$ means the derivative of $h$ in the $wu-$direction.

Each leaf of $\mathcal{F}^{wu}_f$ is a $C^{1+ \alpha}, \alpha > 0,$ submanifold. In local charts $[x, y]^{wu}$  can be seen as an interval.

So, $h'(y) = \rho(y).$ By Lemma \ref{continous},  $h'(y)$ varies continuously for $m-$a.e $y \in \mathbb{T}^ 3.$ Since $\rho$ varies continuously, we extend $h'$ using an Arzela-Ascoli argument type.

We conclude that $h$ is $C^1$ restricted to $\mathcal{F}^{wu}_f.$

\textit{Claim 2: The conjugacy $h$ is $C^{1}.$}

Since  $\mathcal{F}^{wu}_f$ is absolutely continuous,  by  Theorem \ref{gogolev2}, we have  that $\lambda^{wu}_f$ is constant on $Per(f).$ Since $h$ is $C^1$ along $wu-$foliation $\lambda^{wu}(p) = \lambda^{wu}_A := \lambda^{wu}_f, \forall p \in Per(f).$

As $\mathcal{F}^{wu}_f$ is absolutely continuous, so using Theorem \ref{gogolev2}, there is a number $\lambda^{su}_f$ such that $\lambda^{su}(p)=  \lambda^{su}_f,$ for any $p \in Per(f).$ Since the sum of the Lyapunov exponents is zero, then $\lambda^s(p)$ is also  constant $\lambda^s_f,$ for any  $ p \in Per(f).$

Here we emphasize that, the constance of $\lambda^{su}_f$ on $Per(f)$ (strong unstable Lyapunov exponent of periodic points) does not ensure directly that such constant is $ \lambda^{su}_A.$

Now if $\mu$ is an invariant hyperbolic probability measure of $f,$ since $\mu$ a.e. point $x $ is recurrent for $f,$  applying Closing Lemma \ref{closing}, we get  $$\lambda^{\sigma}_f(x) = \lambda^{\sigma}_f, \sigma \in\{s,wu,su\}$$ for $\mu-$a.e. $x \in \mathbb{T}^3.$

By Ruelle's inequality, we have
$$h_{\mu}(f) \leq \lambda^{wu}_f + \lambda^{su}_f.$$

Using the  Pesin's entropy formula we have $$h_m(f) = \lambda^{wu}_f + \lambda^{su}_f.$$

So  $m$ is the maximal entropy measure for $f.$  Using the above expression and the fact $\lambda^{wu}_f = \lambda^{wu}_A,$ we have that

$$\lambda^{su}_f = \lambda^{su}_A,$$
consequently
$$\lambda^{s}_f = \lambda^{s}_A.$$
By Theorem \ref{gogolev1},  Theorem A is proved.
\end{proof}

\begin{corollary}[\cite{SY}, Theorem A]
 Let $f$  be  a  $C^r, r \geq 2,$ a volume preserving  Anosov map such that $T\mathbb{T}^3 = E^s_f \oplus E^{wu}_f \oplus E^{su}_f,$such that $\lambda^{\sigma}_f(x) = \lambda^{\sigma}_A, \sigma \in \{s, wu, su\}$ for $m-$ a.e. $x \in \mathbb{T}^3.$ Then $f$ is $C^1$ conjugated with $A.$
\end{corollary}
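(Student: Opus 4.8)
The plan is to deduce the statement from Theorem A by checking its two hypotheses. The $m$-a.e. equality $\lambda^{wu}_f(x)=\lambda^{wu}_A$ is given, so the only thing to produce is the absolute continuity of $\mathcal{F}^{wu}_f$. I will obtain this from the single identity $h_m(f,\mathcal{F}^{wu}_f)=\lambda^{wu}_A$ together with Ledrappier's Theorem \ref{ledra}; the key point is to establish this identity \emph{without} assuming absolute continuity in advance, by transporting the foliation-entropy computation to the linear side, where the density along the center is constant.

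First I would show that $m$ is the measure of maximal entropy of $f$. Since $f$ is $C^{1+\alpha}$ and $m$ is the (hyperbolic) volume, Pesin's entropy formula gives $h_m(f)=\int(\lambda^{wu}_f+\lambda^{su}_f)\,dm$, and by hypothesis this equals $\lambda^{wu}_A+\lambda^{su}_A$. As $A$ is a hyperbolic automorphism, its topological entropy is the sum of its positive exponents, $h_{top}(A)=\lambda^{wu}_A+\lambda^{su}_A$, and topological entropy is a conjugacy invariant, so $h_{top}(f)=h_{top}(A)=h_m(f)$. Hence $m$ is the unique maximal entropy measure of $f$. Consequently $\nu:=h_\ast m$ is an $A$-invariant measure with $h_\nu(A)=h_m(f)=h_{top}(A)$, so $\nu$ is the maximal entropy measure of $A$, which forces $\nu$ to be the Haar (Lebesgue) measure.

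Next I would compute the center foliation entropy on the linear side and pull it back. Because $\nu$ is Haar and $A$ is linear, the disintegration of $\nu$ along $\mathcal{F}^{wu}_A$ has constant densities, so by Theorem \ref{ledra} one has $h_\nu(A,\mathcal{F}^{wu}_A)=\lambda^{wu}_A$. Now take $\xi$ an increasing subordinate partition for $\mathcal{F}^{wu}_f$ (Lemma \ref{increasing}); as already noted in the proof of Theorem A, $h(\xi)$ is increasing and subordinate to $\mathcal{F}^{wu}_A$, and $h$ conjugates $(f,m,\xi)$ to $(A,\nu,h(\xi))$. Since the foliation entropy $H_\mu(f^{-1}\xi\mid\xi)$ is a measure-isomorphism invariant, this gives $h_m(f,\mathcal{F}^{wu}_f)=h_\nu(A,\mathcal{F}^{wu}_A)=\lambda^{wu}_A=\int\lambda^{wu}_f\,dm$, the last equality using the hypothesis together with ergodicity of $m$. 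By the equivalence in Theorem \ref{ledra}, the disintegration of $m$ along $\mathcal{F}^{wu}_f$ is then absolutely continuous, that is, $\mathcal{F}^{wu}_f$ is absolutely continuous. With this, and with the given $m$-a.e. equality $\lambda^{wu}_f=\lambda^{wu}_A$, the hypotheses of Theorem A are met and it yields the $C^1$ conjugacy.

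The main subtlety is the third step: one must know that $h_m(f,\mathcal{F}^{wu}_f)$ is well defined and equals its counterpart for $A$ \emph{before} knowing anything about absolute continuity, since this is exactly what lets us run the equivalence of Theorem \ref{ledra} in the useful direction. The transfer is legitimate because $H_\mu(f^{-1}\xi\mid\xi)$ depends only on the conditional measures of $\mu$ and on the refinement of $\xi$ by $f^{-1}\xi$, all of which are carried isomorphically by the topological conjugacy $h$ (indeed the same transfer is invoked in the proof of Theorem A). I would also record the harmless points that $A$ has no zero Lyapunov exponents, so Theorem \ref{ledra} applies to $\nu$, and that for the one-dimensional expanding foliation $\mathcal{F}^{wu}$ the positive-exponent sum is simply $\lambda^{wu}$. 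Note finally that the full hypothesis is genuinely used: matching all three exponents a.e. is what makes $m$ a maximal entropy measure and hence $\nu$ Haar, whereas matching only the weak unstable exponent would not place us on the Haar side and the argument would break down, consistent with the non-absolutely-continuous examples of \cite{VARAO}.
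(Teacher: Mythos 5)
Your argument is correct and follows essentially the same route as the paper's own proof: Pesin's formula plus the exponent hypothesis makes $m$ the measure of maximal entropy, so $h_{\ast}m$ is Lebesgue on the linear side, the center foliation entropy transfers through the conjugacy to give $h_m(f,\mathcal{F}^{wu}_f)=\lambda^{wu}_A=\int\lambda^{wu}_f\,dm$, and Theorem \ref{ledra} then yields absolute continuity of $\mathcal{F}^{wu}_f$ so that Theorem A applies. You merely spell out some steps the paper leaves implicit (the variational-principle comparison with $h_{top}(A)$ and the identification of the image measure with Haar), so no further comment is needed.
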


\begin{proof}
By Pesin formula $m$ is the maximal entropy measure of $f.$ Then $(h^{-1})_{\ast}(m) = \nu = m.$
Also
$$h_m(f, \mathcal{F}^{wu}_f)= h_{\nu}(A, \mathcal{F}^{wu}_A) = h_m(A,\mathcal{F}^{wu}_A ) = \lambda^{wu}_A = \int \lambda^{wu}_f(x)dm, $$
the Pesin formula holds for the $wu-$entropy of $f,$ then  by Theorem \ref{ledra} the disintegration of $m$ along $\mathcal{F}^{wu}_f$ is absolutely continuous. Thus $\mathcal{F}^{wu}_f$ is absolutely continuous, and $\lambda^{wu}_f(x) = \lambda^{wu}_A,$ for a.e. $x \in \mathbb{T}^3,$ by Theorem A, the conjugacy $h$ is $C^1.$
\end{proof}

As a corollary of the method in the final part of Theorem A, we can alson prove:

\begin{corollary}
  Let $f: \mathbb{T}^3 \rightarrow \mathbb{T}^3 $ be a $C^r, r \geq 2,$ volume preserving Anosov diffeomorphism such that $T\mathbb{T}^3 =  E^{s}_f \oplus E^{wu}_f \oplus E^{su}_f.$ Suppose that there are constants $\Lambda^{\sigma}_f,  \sigma \in \{s, wu, su\},$ such that  for any $p \in Per(f)$ we have $\lambda^{\sigma}_f(p) = \Lambda^{\sigma}_f, \sigma \in \{s, wu, su\}.$ Then $f$ is $C^1$ conjugated with its linearization $A.$
\end{corollary}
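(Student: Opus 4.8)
The plan is to deduce the statement from Theorem A by verifying its two hypotheses for this $f$: that $\mathcal{F}^{wu}_f$ is absolutely continuous, and that $\lambda^{wu}_f(x)=\lambda^{wu}_A$ for $m$-a.e. $x$. The first hypothesis is immediate: the assumption gives $\lambda^{su}_f(p)=\Lambda^{su}_f$ for every $p\in \mathrm{Per}(f)$, so Theorem \ref{gogolev2} already yields that $\mathcal{F}^{wu}_f$ is absolutely continuous. The second hypothesis is the real content, since the periodic data of $f$ is only controlled intrinsically and is never compared to $A$ in the hypothesis. As a first reduction I would record, exactly as in the final part of Theorem A, that constancy of the periodic data forces the exponents to be constant for every invariant measure: since $f$ is Anosov every invariant measure is hyperbolic, and the Anosov Closing Lemma (Lemma \ref{closing}) together with the fact that each $\lambda^\sigma_f$ is the Birkhoff average of the continuous function $\log\|Df|_{E^\sigma}\|$ gives $\lambda^\sigma_f(x)=\Lambda^\sigma_f$ for $\mu$-a.e. $x$ and every invariant $\mu$; in particular $\lambda^\sigma_f(x)=\Lambda^\sigma_f$ for $m$-a.e. $x$ and $\sigma\in\{s,wu,su\}$.

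Next I would run the entropy argument. Because the exponents take the same constant values $\Lambda^{wu}_f,\Lambda^{su}_f$ for all invariant measures, Ruelle's inequality gives $h_\mu(f)\le \Lambda^{wu}_f+\Lambda^{su}_f$ for every $\mu$, while Pesin's formula gives $h_m(f)=\Lambda^{wu}_f+\Lambda^{su}_f$. Hence $m$ is the measure of maximal entropy of $f$ and $h_{\mathrm{top}}(f)=\Lambda^{wu}_f+\Lambda^{su}_f$. Setting $\nu:=h_\ast m$, conjugacy invariance of entropy gives $h_\nu(A)=h_m(f)=h_{\mathrm{top}}(f)=h_{\mathrm{top}}(A)$, so $\nu$ is a measure of maximal entropy for the linear Anosov automorphism $A$. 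Since a transitive linear automorphism has Lebesgue as its unique measure of maximal entropy, I conclude $\nu=m$, just as in the proof of the Saghin--Yang corollary.

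The crux is then the chain of equalities
$$ \Lambda^{wu}_f = \int \lambda^{wu}_f\, dm = h_m(f,\mathcal{F}^{wu}_f) = h_\nu(A,\mathcal{F}^{wu}_A) = h_m(A,\mathcal{F}^{wu}_A) = \lambda^{wu}_A. $$
The first equality is the $m$-a.e. constancy above; the second uses absolute continuity of $\mathcal{F}^{wu}_f$ together with Theorem \ref{ledra}, the sum of positive exponents along the one-dimensional foliation $\mathcal{F}^{wu}_f$ being exactly $\lambda^{wu}_f$; the third is conjugacy invariance of the foliation entropy, valid because $h(\mathcal{F}^{wu}_f)=\mathcal{F}^{wu}_A$ and $h$ conjugates $f$ to $A$; the fourth uses $\nu=m$; and the last uses that $m$ is absolutely continuous along the linear foliation $\mathcal{F}^{wu}_A$, so Theorem \ref{ledra} applies to $A$, whose derivative is constant along $E^{wu}_A$. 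This yields the missing identity $\Lambda^{wu}_f=\lambda^{wu}_A$.

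With this in hand I would finish quickly. Since $\lambda^{wu}_f(x)=\Lambda^{wu}_f=\lambda^{wu}_A$ for $m$-a.e. $x$ and $\mathcal{F}^{wu}_f$ is absolutely continuous, Theorem A applies and $h$ is a $C^1$ conjugacy. (Alternatively one may close as in Theorem A: $m$ being maximal together with $\Lambda^{wu}_f=\lambda^{wu}_A$ forces $\Lambda^{su}_f=\lambda^{su}_A$ and then $\Lambda^s_f=\lambda^s_A$, so the periodic data of $f$ coincides with that of $A$ and Theorem \ref{gogolev1} concludes.) I expect the only genuine obstacle to be the identity $\Lambda^{wu}_f=\lambda^{wu}_A$, precisely because, in contrast with Theorem A and the Saghin--Yang corollary, the hypothesis never compares the weak unstable exponent of $f$ with that of $A$; the device that overcomes this is the observation that $h_\ast m$ is forced to be the unique maximal measure of the linear map, namely Lebesgue, which is exactly what licenses the last two equalities in the displayed chain.
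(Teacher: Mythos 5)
Your proposal is correct, and the first half (Anosov Closing Lemma to make the exponents constant for every invariant measure, then Ruelle plus Pesin to conclude that $m$ is the measure of maximal entropy) coincides with the paper's argument. Where you diverge is in how the intrinsic constants $\Lambda^{\sigma}_f$ get compared with the exponents of $A$. The paper stays on the periodic-data side: from $h_{top}(f)=h_{top}(A)$ it gets $\Lambda^{wu}_f+\Lambda^{su}_f=\lambda^{wu}_A+\lambda^{su}_A$, and then invokes the inequality of \cite{MT} (absolute continuity of $\mathcal{F}^{wu}_f$ and $\mathcal{F}^{su}_f$ forces $\Lambda^{\sigma}_f\leq\lambda^{\sigma}_A$ for $\sigma\in\{wu,su\}$) so that neither inequality can be strict; this yields coincidence of the full periodic data at once and closes via Theorem \ref{gogolev1} without passing through Theorem A. You instead identify $h_{\ast}m$ with Lebesgue via uniqueness of the measure of maximal entropy of the linear automorphism and extract $\Lambda^{wu}_f=\lambda^{wu}_A$ from the Ledrappier foliation-entropy chain $\Lambda^{wu}_f=h_m(f,\mathcal{F}^{wu}_f)=h_m(A,\mathcal{F}^{wu}_A)=\lambda^{wu}_A$, then feed this into Theorem A. This is essentially the mechanism the paper uses for its \emph{other} corollary (the Saghin--Yang statement), transplanted here; it trades the external input from \cite{MT} for the uniqueness of the maximal measure of $A$ plus Theorem \ref{ledra} applied on both sides of the conjugacy. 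Both routes are sound; the paper's is shorter and symmetric in $wu$ and $su$, while yours is more self-contained relative to the tools already developed in the proof of Theorem A and makes explicit why $h_{\ast}m$ must be Lebesgue.
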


\begin{proof}
First we observe that by using Anosov Closing Lemma, as in Theorem A, we conclude that
$$h_{\mu}(f) \leq \Lambda^{wu}_f + \Lambda^{su}_f, $$
where $\mu$ is an arbitrary Borelian invariant probability measure for $f.$
By Pesin's formula
$$h_{m}(f) = \Lambda^{wu}_f + \Lambda^{su}_f, $$
so $m$ is the measure of maximal entropy of $f,$ it implies that
\begin{equation}
\Lambda^{wu}_f + \Lambda^{su}_f =  \lambda^{wu}_A + \lambda^{su}_A \label{inequality}.
\end{equation}

The constance of $\lambda^{su}_f$ along $Per(f),$ implies that $\mathcal{F}^{wu}_f$ is absolutely continuous. Now using the absolute continuity of $\mathcal{F}^{wu}_f$ and $\mathcal{F}^{su}_f,$ by \cite{MT} we have
$$\Lambda^{\sigma}_f \leq \lambda^{\sigma}_A, \sigma \in \{wu, su\}.$$

So by equation $(\ref{inequality})$, none of the above inequalities can be strict, so we have coincidence of periodic data between $f$ and $A.$ By Theorem \ref{gogolev1}, we conclude that $f$ and $A$ are $C^1-$conjugated.

\end{proof}

\section{Desintegration with Uniform Bounded Density }
In this section we connect rigidity in terms of $C^1-$conjugacy with strong regularity conditions on invariant foliations.

First we prove Corollary \ref{anosovUBD} as a consequence of Theorem A.
\begin{proof}
If $\mathcal{F}^{wu}$ has UBD property,   $\mathcal{F}^{wu}$ is absolutely continuous and by \cite{MT}, $\lambda^{wu}_f(x) =  \lambda^{wu}_A,$ for $m$ a.e. $x \in \mathbb{T}^3.$  We are in the hypotheses of Theorem A, and the corollary is proved.
\end{proof}

\begin{remark} If $f$ is derived from Anosov diffeomorphism and $\mathcal{F}^c$ has the UBD property, it is  possible to prove that $f$ is Anosov. See  \cite{VARAO2}.
\end{remark}

To prove Theorem B, we need the following lemma.

\begin{lemma}\label{lemmaUBD} Let $f: M \rightarrow M$ be a diffeomorphism of a (compact, connected, boundaryless, orientable) $C^{\infty}-$manifold. Consider $\mathcal{F}$ an invariant foliation by $f.$ Suppose that $f$ preserves a volume form $m$ defined on $M.$ If $\mathcal{F}$ is a foliation of $M$ with $C^1-$leaves having the UBD-propety then, for $x, y \in \mathcal{F}(x)$ (the $\mathcal{F}-$leaf by $x$) we have $\displaystyle\prod_{j=0}^{+\infty}\displaystyle\frac{|J^{\mathcal{F}}f^ n(x)|}{|J^{\mathcal{F}}f^n (y)|} \in [K^{-4}, K^4],$ where the constant $K$ is the same as in the definiton of UBD-property and $J^{\mathcal{F}}f(x)$ denotes jacobian of $f$ restricted to the tangent space of $\mathcal{F}(x).$
 \end{lemma}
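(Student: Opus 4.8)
The plan is to extract from the $f$-invariance of $m$ a transformation rule for the conditional densities of $m$ along $\mathcal{F}$, and then to feed the uniform two-sided bound coming from the UBD property into this rule at two different times along the orbit. First I would record that the UBD property presupposes that the conditional measures of $m$ on $\mathcal{F}$-plaques are absolutely continuous with respect to the leafwise volume, with density pointwise in $[K^{-1},K]$ relative to the normalized leaf volume. Writing $\rho$ for the leafwise (un-normalized) conditional density, the intrinsic ratio $\rho(x)/\rho(y)$ for $x,y$ on a common leaf is independent of the foliated box and of the subordinate partition used to compute it, by uniqueness of the Rokhlin disintegration. Hence, whenever $x$ and $y$ lie in a common plaque of some box, the normalization constant of that plaque cancels and UBD gives
\[
\frac{\rho(x)}{\rho(y)} \in [K^{-2}, K^2].
\]

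Next I would derive the transformation rule from invariance. Since $f$ preserves $m$ and the foliation $\mathcal{F}$, the corollary to Rokhlin's disintegration gives $f_\ast m_P = m_{f(P)}$ for the conditional measures. Pushing the identity $dm_P = \rho\, d\mathrm{Vol}_{\mathcal F}$ forward by $f$ and using the leafwise change of variables $d(f_\ast \mathrm{Vol}_{\mathcal F}) = (J^{\mathcal F}f)^{-1}\, d\mathrm{Vol}_{\mathcal F}$, then comparing at two points of a common leaf (where the leafwise normalization cancels), one obtains
\[
\frac{\rho(f(x))}{\rho(f(y))} = \frac{\rho(x)}{\rho(y)}\cdot \frac{J^{\mathcal F}f(y)}{J^{\mathcal F}f(x)}.
\]
Iterating $n$ times and telescoping $J^{\mathcal F}f^n = \prod_{k=0}^{n-1} J^{\mathcal F}f\circ f^k$ yields the exact identity
\[
\frac{J^{\mathcal F}f^n(x)}{J^{\mathcal F}f^n(y)} = \frac{\rho(x)}{\rho(y)}\cdot \frac{\rho(f^n(y))}{\rho(f^n(x))}.
\]

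Finally I would apply the UBD bound at time $0$ and at time $n$. Both $x,y$ and their images $f^n(x),f^n(y)$ lie on a single leaf of $\mathcal F$, so — crucially using that the UBD constant $K$ is independent of the size of the box — I can choose a foliated box whose plaque contains the arc from $f^n(x)$ to $f^n(y)$, and conclude $\rho(f^n(y))/\rho(f^n(x)) \in [K^{-2},K^2]$ exactly as at time $0$. The displayed identity then forces $J^{\mathcal F}f^n(x)/J^{\mathcal F}f^n(y) \in [K^{-4},K^4]$ for every $n$; and since the stated product telescopes to $\lim_n J^{\mathcal F}f^n(x)/J^{\mathcal F}f^n(y)$, the same bound passes to the infinite product.

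I expect the main obstacle to be twofold and essentially bookkeeping in nature: first, justifying the transformation rule cleanly, since $\rho$ is only defined up to a leafwise multiplicative constant, so that one must stay on a single leaf in order for these constants to cancel; and second, the legitimacy of comparing the densities at the (after iteration, possibly far apart) images $f^n(x),f^n(y)$ — this is precisely the point where the size-independence built into the UBD property is indispensable, as it provides the comparison $\rho(f^n(y))/\rho(f^n(x)) \in [K^{-2},K^2]$ uniformly in $n$ regardless of how $f^n$ has distorted the arc.
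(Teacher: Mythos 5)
Your proposal is correct and follows essentially the same route as the paper: derive the transformation rule $\rho(f(x))/\rho(f(y)) = \bigl(\rho(x)/\rho(y)\bigr)\cdot\bigl(|J^{\mathcal F}f(y)|/|J^{\mathcal F}f(x)|\bigr)$ from $f_\ast m_x^B = m_{f(x)}^{f(B)}$ (the paper does this via the Lebesgue derivation theorem, you via leafwise change of variables — the same computation), iterate, and apply the UBD bound $\rho(x)/\rho(y)\in[K^{-2},K^2]$ at times $0$ and $n$ to get $[K^{-4},K^4]$. Your explicit remarks on the cancellation of the plaque normalization constants and on the box-size independence of $K$ being what makes the time-$n$ comparison legitimate are exactly the points the paper's argument relies on.
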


\begin{proof} Consider $B$ a $\mathcal{F}-$foliated box and $L_x$ the connected component of $\mathcal{F}(x)$ contained in $B.$ Denote by $m_B$ the normalized volume restricted to $B$ and by $\lambda_x$ the volume form induced by the Riemannian structure of $M$ on $\mathcal{F}(x).$ Since $f$ preserves the volume form $m,$ then $f_{\ast}m_B = m_{f(B)}.$ Moreover, if $m_x^B$ denotes the condition measures as in the definition, we have $m_{f(x)}^{f(B)} = f_{\ast}m_x^B.$ Let $\rho(x)$ be the density $\frac{d m_x^ B}{d\hat{\lambda}_x},$ we can compute $\rho(f(x))$ using Lebesgue derivation Theorem. Consider $\varepsilon > 0$ and $B_{\varepsilon}(x)$ the open ball in $M,$ centered in $x$ with radius $\varepsilon > 0.$ We have
$$\frac{m_{f(x)}^{f(B)}(f(B_{\varepsilon}(x)))}{\hat{\lambda}_{f(x)}(f(B_{\varepsilon}(x)))} = \frac{m_x^B(B_{\varepsilon}(x))}{\left(\int_{B_{\varepsilon}(x)}|J^{\mathcal{F}}f(t)| d\lambda_x(t) \right) /|L_{f(x)}|},$$
 where $|L_{f(x)}| = \lambda_{f(x)}(L_{f(x)}) .$  So we have
 $$\frac{m_{f(x)}^{f(B)}(f(B_{\varepsilon}(x)))}{\hat{\lambda}_{f(x)}(f(B_{\varepsilon}(x)))} = |L_{f(x)}|\cdot \frac{m_x^B(B_{\varepsilon}(x))}{\int_{B_{\varepsilon}(x)}|J^{\mathcal{F}}f(t)| d\lambda_x(t)}.$$

Dividing numerator and denominator in right hand of the last expression by $\hat{\lambda}_x(B_{\varepsilon}(x)) = \frac{\lambda_x(B_{\varepsilon}(x))}{|L_x|},$ and passing limit when $\varepsilon \rightarrow 0,$ we obtain
\begin{equation} \label{density1}
\rho(f(x)) = \frac{dm_{f(x)}^{f(B)}}{d \hat{\lambda}_{f(x)}}= \frac{|L_{f(x)}|}{|L_x|}\cdot\frac{\rho(x)}{|J^{\mathcal{F}}f(x)|}.
\end{equation}

 By (\ref{density1}), if $x,y \in L_x,$  we have $L_x = L_y$ and $L_{f(x)} = L_{f(y)}$, so $\frac{\rho(f(x))}{\rho(f(y))} = \frac{\rho(x)}{\rho(y)}\frac{|J^{\mathcal{F}}f(y)|}{|J^{\mathcal{F}}f(x)|}.$ Applying the chain rule and (\ref{density1}) repeatedly, we get

 \begin{equation} \label{density2}
\frac{\rho(f^n(x))}{\rho(f^n(y))} = \frac{\rho(x)}{\rho(y)} \cdot\frac{|J^{\mathcal{F}}f^n(y)|}{|J^{\mathcal{F}}f^n(x)|} = \frac{\rho(x)}{\rho(y)}\cdot \displaystyle\prod_{j=0}^{n-1} \frac{|J^{\mathcal{F}}f(f^j(y))|}{|J^{\mathcal{F}}f(f^j(x))|},
\end{equation}
 for any integer $n \geq 1.$

Since $\frac{\rho(f^n(x))}{\rho(f^n(y))}, \frac{\rho(x)}{\rho(y)} \in [K^{-2}, K^2],$ by definition of UBD, then dividing both sides in (\ref{density2})  by  $\frac{\rho(y)}{\rho(x)},$ we obtain $\displaystyle\prod_{j=0}^{n-1} \frac{|J^{\mathcal{F}}f(f^j(x))|}{|J^{\mathcal{F}}f(f^j(x))|} \in [K^{-4}, K^4],$ for any $n \geq 1.$ It concludes the proof.
\end{proof}

An straightforward consequence of the above lemma is that if $\mathcal{F}$ is one dimensional foliation and $x$ is a regular point for $f,$ then the Lyapunov exponent of $f$ in the  $T\mathcal{F}$ is constant on $\mathcal{F}(x).$ In other words,
$$\displaystyle\lim_{n\rightarrow +\infty} \frac{1}{n} \log(|J^{\mathcal{F}}f^n(x)|)  = \displaystyle\lim_{n\rightarrow +\infty} \frac{1}{n} \log(|J^{\mathcal{F}}f^n(y)|) , \; \forall y \in \mathcal{F}(x). $$

\begin{theorem}[Theorem B]
Let $f: \mathbb{T}^3 \rightarrow \mathbb{T}^3 $ be a $C^r, r\geq 2, $ DA diffeomorphism, volume preserving, with linearization $A,$ such that $T\mathbb{T}^3 = E^s_A \oplus E^{wu}_A \oplus E^{su}_A. $ Suppose that $\mathcal{F}^u_f$ and $\mathcal{F}^s_f$ are minimal foliations with the UBD property. Then $f$ is Anosov and it is $C^1-$conjugated with $A.$
\end{theorem}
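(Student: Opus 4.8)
The plan is to show that the two hypotheses force all three Lyapunov exponents of $f$ to be genuine constants, uniformly in $\mathbb{T}^3$, to deduce from this that $f$ is Anosov, and only then to match these constants with the exponents of $A$ and invoke Theorem \ref{gogolev1}.

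First I would combine minimality with Lemma \ref{lemmaUBD}. Since $\mathcal{F}^{su}_f$ and $\mathcal{F}^s_f$ are one–dimensional, the lemma already gives that the finite sums $S^{su}_n(x)=\log|J^{su}f^n(x)|$ satisfy $|S^{su}_n(x)-S^{su}_n(y)|\le 4\log K$ whenever $x,y$ lie on a common strong unstable leaf, and analogously for $S^{s}_n$. Fixing a periodic point $p$ and using that $\mathcal{F}^{su}_f(p)$ is dense (minimality) together with the continuity of each $S^{su}_n$, I would pass this bound to every point of $\mathbb{T}^3$, obtaining $|S^{su}_n(y)-S^{su}_n(p)|\le 4\log K$ for all $y$ and all $n$. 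Dividing by $n$ yields uniform convergence of $\tfrac1n S^{su}_n$ to the periodic value $\Lambda^{su}_f:=\lambda^{su}_f(p)$, and likewise $\tfrac1n S^{s}_n\to\Lambda^{s}_f$ uniformly. Volume preservation, i.e. $|J^sf^n|\,|J^{wu}f^n|\,|J^{su}f^n|\equiv 1$, then forces $\tfrac1n\log|J^{wu}f^n|$ to converge uniformly to $\Lambda^{wu}_f:=-\Lambda^s_f-\Lambda^{su}_f$. In particular every ergodic measure has exponents exactly $\Lambda^s_f<0$, $\Lambda^{wu}_f$ and $\Lambda^{su}_f>0$.

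Next I would prove that $f$ is Anosov by showing $\Lambda^{wu}_f>0$. Assume instead $\Lambda^{wu}_f\le 0$. Then for every invariant $\mu$ the sum of the positive exponents equals $\Lambda^{su}_f$, so Ruelle's inequality and the variational principle give $h_{top}(f)\le\Lambda^{su}_f$. As $f$ is $C^2$, the strong unstable foliation is absolutely continuous, so \cite{MT} yields $\Lambda^{su}_f\le\lambda^{su}_A$ and hence $h_{top}(f)\le\lambda^{su}_A$. But $f$ is conjugate to $A$ by a homeomorphism \cite{FRANKS}, whence $h_{top}(f)=h_{top}(A)=\lambda^{wu}_A+\lambda^{su}_A>\lambda^{su}_A$, a contradiction. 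Therefore $\Lambda^{wu}_f>0$, and the uniform convergence $\tfrac1n\log\|Df^n|_{E^{wu}}\|\to\Lambda^{wu}_f>0$ makes $E^{wu}_f$ uniformly expanding; together with $E^s_f$ contracting and $E^{su}_f$ expanding this shows $f$ is Anosov.

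Finally I would upgrade the inequalities to equalities. Now that $f$ is Anosov and conservative, $m$ is ergodic and Pesin's formula gives $h_m(f)=\Lambda^{wu}_f+\Lambda^{su}_f$; since all exponents are the above constants, Ruelle's inequality bounds every $h_\mu(f)$ by the same quantity, so $m$ is a measure of maximal entropy and $\Lambda^{wu}_f+\Lambda^{su}_f=h_{top}(f)=\lambda^{wu}_A+\lambda^{su}_A$. The constancy of $\Lambda^{su}_f$ on $Per(f)$ makes $\mathcal{F}^{wu}_f$ absolutely continuous by Theorem \ref{gogolev2}, so \cite{MT} yields both $\Lambda^{su}_f\le\lambda^{su}_A$ and $\Lambda^{wu}_f\le\lambda^{wu}_A$; as their sum is already fixed, both must be equalities, giving $\Lambda^{\sigma}_f=\lambda^{\sigma}_A$ for $\sigma\in\{s,wu,su\}$ on every periodic point. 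Theorem \ref{gogolev1} then delivers the $C^1$ conjugacy. The step I expect to be the main obstacle is precisely the assertion that $f$ is Anosov: the delicate part is converting the leafwise estimate of Lemma \ref{lemmaUBD} into genuinely \emph{uniform} control of the central Jacobian, which is exactly where minimality of both strong foliations is indispensable, and then excluding the neutral case $\Lambda^{wu}_f=0$ through the entropy identity $h_{top}(f)=h_{top}(A)$ rather than by any perturbative device.
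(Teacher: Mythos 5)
Your argument is correct in substance, and its first half --- using Lemma \ref{lemmaUBD} together with minimality and the continuity of the finite-time Jacobians to make $\frac1n\log|J^{\sigma}f^n|$ converge uniformly to constants $\Lambda^{\sigma}_f$, $\sigma\in\{s,su\}$, and then volume preservation to control the central exponent --- is exactly the paper's opening move. Where you diverge is in how the constants are identified and how Anosov-ness is obtained. The paper invokes the stronger statement of \cite{MT} that the UBD property forces \emph{equality}, $\lambda^{s}_f=\lambda^{s}_A$ and $\lambda^{u}_f=\lambda^{su}_A$, outright; positivity of the central exponent then falls out of $\lambda^{s}_A+\lambda^{wu}_A+\lambda^{su}_A=0$ together with the uniform finite-time estimates, and the proof closes by feeding absolute continuity of $\mathcal{F}^{wu}_f$ and $\lambda^{wu}_f=\lambda^{wu}_A$ into Theorem A. You instead use only the weaker ``absolute continuity $\Rightarrow\Lambda^{\sigma}_f\le\lambda^{\sigma}_A$'' half of \cite{MT} and recover the equalities from entropy bookkeeping (Ruelle, Pesin, the variational principle and comparison with $h_{top}(A)$), essentially rerunning the corollary that follows Theorem A in the paper and finishing with Theorem \ref{gogolev1} rather than Theorem A. This buys independence from the UBD-equality theorem of \cite{MT} at the price of a longer argument; both routes are legitimate.

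One point to repair: in the contradiction step you assert $h_{top}(f)=h_{top}(A)$ because ``$f$ is conjugate to $A$ by a homeomorphism'' citing \cite{FRANKS}. At that stage $f$ is only a DA diffeomorphism, so Franks' theorem provides a surjective \emph{semi}conjugacy $h\circ f=A\circ h$, not a homeomorphism; the conjugacy is available only after Anosov-ness has been established. Fortunately the semiconjugacy already yields the inequality you actually use, namely $h_{top}(f)\ge h_{top}(A)=\lambda^{wu}_A+\lambda^{su}_A$, so the contradiction with $h_{top}(f)\le\Lambda^{su}_f\le\lambda^{su}_A$ survives; just phrase it as a factor-map inequality rather than as invariance of entropy under conjugacy. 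The same caveat applies, harmlessly, to the equality $h_{top}(f)=h_{top}(A)$ in your final step, since by then $f$ is already known to be Anosov.
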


\begin{proof}
Denote by $\rho^u_B(x)=  \frac{dm_x^B}{d \widehat{\lambda_x}}$ the density of conditional measure $m^x_B$ w.r.t $\widehat{\lambda_x},$ when we consider a $\mathcal{F}^u_f-$foliated box $B.$
If $y \in \mathcal{F}^u_f(x),$ by Lemma \ref{lemmaUBD} we have

%

$$K^{-4} \leq \frac{|Df^ n(x)|E^u_f(x)|}{|Df^ n(y)|E^u_f(y)|}\leq K^4,$$
for any $x,y$ in the same $u-$leaf, and for any $n \geq 1.$

Now fix $x,z \in \mathbb{T}^3,$ such that $x$ is regular (Lyapunov exponents exist) and consider $n \geq 1 $ an integer number. Consider $\mathcal{F}^u_f(x)$ the $u-$  leaf by $x.$ Using that $\mathcal{F}^u_f(x)$ is dense in $\mathbb{T}^3,$
  it is possible, to choose a sequence $\{y_k\}_{k \in \mathbb{N}}$  of points in $\mathcal{F}^u_f(x),$ convering to a point $z,$ such that for every $k \in \mathbb{N},$ we have  $K^{-1} \leq \frac{Df^n(z)|E^u_f(z)}{Df^n(y_k)|E^u_f(y_k)} \leq K.$  So
 $$\frac{|Df^n(z)|E^u_f(z)|}{|Df^n(x)|E^u_f(x)|} = \frac{|Df^n(z)|E^u_f(z)|}{|Df^ n(y_k)|E^u_f(y_k)|} \cdot \frac{|Df^ n(y_k)|E^u_f(y_k)|}{|Df^n(x)|E^u_f(x)|}, $$
 where $K^{-4} \leq \frac{|Df^ n(y_k)|E^u_f(y_k)|}{|Df^ n(x)|E^u_f(x)|}\leq K^4,$ and $K^{-1} \leq \frac{|Df^n(z)|E^u_f(z)|}{|Df^n(y_k)|E^u_f(y_k)|} \leq K,$ thus
 $$K^{-5}\leq \frac{|Df^n(x)|E^u_f(x)|}{|Df^n(z)|E^u_f(z)|}\leq K^5,$$
 for any $n \geq 1.$ In particular  $\lambda^u_f(z) = \lambda^u_f(x),$ for any $z \in \mathbb{T}^3$.

 In the same way  $\lambda^s_f(z) = \lambda^s_f(x),$ for any $z \in \mathbb{T}^3,$ with fixed $x$ a regular point.

Moreover it is possible to choose a constant $L > 0,$ such that
\begin{equation}\label{uniform}
  L^{-1}\leq \frac{|Df^n(x)|E^{\sigma}_f(x)|}{|Df^n(z)|E^{\sigma}_f(z)|}\leq L, \; \sigma \in \{s,u\}
\end{equation}
for any $n \geq 1$ and for any $z \in \mathbb{T}^3,$ with fixed $x$.

By \cite{MT}, since the foliations  $\mathcal{F}^s_f$ and $\mathcal{F}^u_f$ have the UBD property, then $\lambda^{s}_f(x) = \lambda^{s}_A$ and $\lambda^{u}_f(x) = \lambda^{su}_A$   for any $x \in \mathbb{T}^3.$

We claim that $f$ is really an Anosov diffeormorphism.

Fix $0 < \varepsilon < \frac{\lambda^{wu}_A}{4} $ and  choose an $N \geq 1,$ such that $$\left| \frac{1}{n} \log(|J^{s}f^n(x)|) - \lambda^{s}_A \right| <  \varepsilon,$$
 $$\left| \frac{1}{n} \log(|J^{u}f^n(x)|) - \lambda^{su}_A \right| <  \varepsilon,$$
 for any $n \geq N$ and for all $x \in \mathbb{T}^3.$

Since $f$ is volume preserving,
 by the choice of $\varepsilon,$ for any $n \geq N, $ we have

$$ \frac{1}{n} \log(|J^{u}f^n(x)|) \geq \lambda^{wu}_A - 2\varepsilon > \frac{\lambda^{wu}_A}{2}, $$
so for any $x \in \mathbb{T}^3$ we have
$$ |Df^n(x)| E^c_f(x)| \geq e^{n\frac{\lambda^c_A}{2}}, \; \forall \, n \geq N.$$
which yields that $f$ is uniformly expanding on $E^c_f,$ consequently $f$ is Anosov.
As the function $x \mapsto\lambda^{u}_f(x)$ is constant, we conclude that  $\mathcal{F}^c_f := \mathcal{F}^{wu}_f$ is absolutely continuous, by Theorem \ref{gogolev2}. We also have $\lambda^{wu}_f(x) : = \lambda^{wu}_A, $ for any $x \in \mathbb{T}^3.$
Putting these facts together Theorem A, we conclude the proof.
\end{proof}

Using Lemma \ref{lemmaUBD} and the ideas in the proof of Theorem B, we can prove.

\begin{proposition} Let $f: \mathbb{T}^3 \rightarrow \mathbb{T}^3$ be a volume preserving $C^r, r \geq 2,$ derived from Anosov diffeomorphism. Suppose that $\mathcal{F}^c_f$ has UBD property and $\lambda^c_f(x) > 0,$ for $m$ a.e. point $x\in \mathbb{T}^3,$ then $f$ is an Anosov diffeomorphism and $\mathcal{F}^c_f$ is the weak unstable foliation.
\end{proposition}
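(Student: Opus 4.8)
The plan is to mirror the structure of the proof of Theorem B, now exploiting the UBD property of the \emph{center} foliation $\mathcal{F}^c_f$ together with the positivity of the center exponent to force uniform expansion in the center direction. First I would invoke Lemma \ref{lemmaUBD} applied to the one-dimensional foliation $\mathcal{F}=\mathcal{F}^c_f$. By the remark immediately following that lemma, for any regular point $x$ the center Lyapunov exponent is constant along the leaf $\mathcal{F}^c_f(x)$, that is, $\lambda^c_f(x)=\lambda^c_f(y)$ for all $y\in\mathcal{F}^c_f(x)$. The hypothesis $\lambda^c_f(x)>0$ for $m$-a.e.\ $x$, combined with volume preservation and the zero-sum relation among the exponents, shows that on a full-measure set the center exponent equals a single positive value; the harder point is to upgrade this almost-everywhere statement to a genuinely uniform estimate valid at \emph{every} point of $\mathbb{T}^3$.

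To obtain the uniform bound I would use minimality, or rather density, of the center leaves in the DA setting, exactly as in Theorem B: fixing a regular point $x$ whose orbit realizes the positive center exponent, and using that the conditional densities are pinched in $[K^{-1},K]$ inside any foliated box, I would approximate an arbitrary point $z$ by points $y_k\in\mathcal{F}^c_f(x)$ and telescope the Jacobian ratios through $y_k$. This yields a constant $L>0$ with
\begin{equation}\label{propuniform}
L^{-1}\leq \frac{|Df^n(x)|E^c_f(x)|}{|Df^n(z)|E^c_f(z)|}\leq L,
\end{equation}
for all $n\geq 1$ and all $z\in\mathbb{T}^3$. The one subtlety here, not needed in Theorem B because there minimality was assumed, is that for a DA diffeomorphism the center (weak unstable of the linearization) leaves are dense; I would justify this from the semiconjugacy $h$ and the fact that the corresponding linear foliation $\{h(x)\}+E^{wu}_A$ is dense on the torus, so its $h$-preimage is dense as well. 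Then \eqref{propuniform} pushes the positive exponent to every point: choosing $0<\varepsilon<\lambda^c_f/4$ and $N$ large so that $\frac1n\log|J^cf^n(x)|>\lambda^c_f-\varepsilon$, inequality \eqref{propuniform} gives $|Df^n(z)|E^c_f(z)|\geq e^{n\lambda^c_f/2}$ for every $z$ and all $n\geq N$.

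Uniform expansion on $E^c_f$, together with uniform expansion on $E^{su}_f$ and contraction on $E^s_f$ inherited from the partial hyperbolicity, shows that $Df$ expands the whole two-plane $E^c_f\oplus E^{su}_f$ uniformly while contracting $E^s_f$, hence $f$ is Anosov with the center direction absorbed into the unstable bundle. Finally, since the center direction is now uniformly expanding and one-dimensional, it is the weak unstable subbundle of the Anosov splitting, so $\mathcal{F}^c_f$ is the weak unstable foliation $\mathcal{F}^{wu}_f$. The main obstacle, as indicated above, is the passage from the almost-everywhere constancy and positivity of $\lambda^c_f$ to a pointwise uniform lower bound; this is precisely where the UBD pinching of densities in Lemma \ref{lemmaUBD} and the density of the center leaves are essential, and where care is required to ensure the telescoping estimate \eqref{propuniform} holds with constants independent of $z$ and $n$.
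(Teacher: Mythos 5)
Your proposal follows essentially the same route as the paper's proof: leafwise constancy of $\lambda^c_f$ from Lemma \ref{lemmaUBD}, minimality of $\mathcal{F}^c_f$ to propagate the constant positive exponent to every point with uniform bounds exactly as in Theorem B, and a choice of $N$ giving uniform expansion on $E^c_f$, hence Anosov. The one caveat is your justification of density of center leaves (the preimage of a dense linear leaf under the semiconjugacy need not be dense in general); the paper instead cites Hammerlindl's result that $\mathcal{F}^c_f$ is minimal for DA diffeomorphisms of $\mathbb{T}^3$, which is the clean way to supply that step.
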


\begin{proof}
Let $x$  be a point such that  $\lambda^c_f(x) > 0. $  Since $\mathcal{F}^c_f$ has UBD property, then for every $y \in \mathcal{F}^c_f(x) $ we conclude $\lambda^c_f(y) = \lambda^c_f(x).$ By \cite{H}, the foliation $\mathcal{F}^c_f$ is minimal, then using its minimality, as in the proof of Theorem B, we have $\lambda^c_f(z) = \lambda^c_f(x) := \lambda^c_f,$ for any $z \in \mathbb{T}^3.$  Again, as in the proof of Theorem B, it is possible to choose $N > 0,$ such that $$ | Df^n(z)| E^c_f(z)| \geq \frac{1}{K^5} e^{n\frac{\lambda^c_f}{2}},$$
for any $n \geq N$ and for every $z \in \mathbb{T}^3.$ The constant $K$ is as in the same of definition of UBD.  So $Df|E^c_f$ is uniform expanding and finally $f$ is Anosov.
\end{proof}

From Corollary \ref{anosovUBD} and the above proposition we can conclude, by another approach, the result due to R.Var\~{a}o, \cite{VARAO2}.

\begin{theorem} Let $f: \mathbb{T}^3 \rightarrow \mathbb{T}^3 $ be a $C^r, r \geq 2,$ conservative partially hyperbolic diffeomorphism
homotopic to a linear Anosov. The center foliation has the
Uniform Bounded Density property if and only if f is $C^1-$conjugate to its linearization $A.$
\end{theorem}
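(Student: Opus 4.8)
The plan is to read the statement as an equivalence and treat the two implications separately, putting essentially all of the weight on the implication ``UBD of the center $\Rightarrow$ $C^1$ conjugacy'', which I would reduce to the preceding Proposition together with Corollary \ref{anosovUBD}. The reverse implication, ``$C^1$ conjugacy $\Rightarrow$ UBD of the center'', I expect to be soft: it should follow by transporting through the $C^1$ conjugacy the (trivially uniform) conditional densities of the linear model. So the skeleton is: turn the UBD hypothesis into a statement about the center Lyapunov exponent, feed it into the Proposition to get that $f$ is Anosov with weak unstable center, and then quote Corollary \ref{anosovUBD}.

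For the forward implication I would argue as follows. By Lemma \ref{lemmaUBD}, the center exponent $\lambda^c_f(\cdot)$ is constant along each center leaf; since $\mathcal{F}^c_f$ is minimal (by \cite{H}), this constancy propagates to all of $\mathbb{T}^3$ exactly as in the proof of Theorem B, so $\lambda^c_f(x)\equiv\lambda^c_f$. If $\lambda^c_f>0$, the preceding Proposition gives that $f$ is Anosov and that $\mathcal{F}^c_f=\mathcal{F}^{wu}_f$, and Corollary \ref{anosovUBD} then produces the $C^1$ conjugacy. If $\lambda^c_f<0$ I would apply the same reasoning to $f^{-1}$: the center foliation, and hence the UBD hypothesis, is unchanged, while $\lambda^c_{f^{-1}}=-\lambda^c_f>0$ and $f^{-1}$ is again a DA diffeomorphism with linearization $A^{-1}$; the Proposition and Corollary then give that $f^{-1}$ is Anosov and $C^1$-conjugate to $A^{-1}$, which is equivalent to $f$ being Anosov and $C^1$-conjugate to $A$.

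For the reverse implication, suppose $h$ is a $C^1$ conjugacy, $h\circ f=A\circ h$; replacing $f$ by $f^{-1}$ if necessary I may assume the center of $A$ is expanding, so $f$ is Anosov with expanding center and Ledrappier's density formula in Theorem \ref{ledra} applies. Differentiating the conjugacy along the one-dimensional center yields the coboundary relation $J^cf(x)=|\mu_A|\,\phi(x)/\phi(f(x))$, where $\phi=\|Dh|_{E^c_f}\|$ is continuous and bounded above and below on the compact torus and $|\mu_A|$ is the constant center expansion of $A$. Substituting this into $\rho(x)/\rho(y)=\prod_{i\ge 0} J^cf(f^{-i}x)/J^cf(f^{-i}y)$, the factors $|\mu_A|$ cancel and the product telescopes; using that $f^{-1}$ contracts the center (so $f^{-i}x$ and $f^{-i}y$ approach each other along the leaf and $\phi(f^{-i}x)/\phi(f^{-i}y)\to 1$), the ratio $\rho(x)/\rho(y)$ is controlled by $\sup\phi/\inf\phi$ uniformly over all leaves. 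Bounded oscillation of $\rho$ independent of the foliated box is precisely the UBD property, which finishes this direction.

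The one genuine gap in the dichotomy above is the borderline case $\lambda^c_f=0$, which is covered neither by the Proposition nor by its time-reversal; this is where I expect the main difficulty to lie. I would close it by invoking the exponent rigidity of \cite{MT}, according to which UBD of the center forces $\lambda^c_f$ to coincide with the center exponent $\lambda^{wu}_A$ of the linearization, which is nonzero, so the vanishing case cannot occur; alternatively one may appeal to the Remark following Corollary \ref{anosovUBD} and to \cite{VARAO2}, where UBD of the center already yields that $f$ is Anosov and hence precludes a zero center exponent. The delicate point to verify is that this exponent identity is available \emph{before} $f$ is known to be Anosov, so that the argument is not circular; checking the applicability of \cite{MT} in the merely partially hyperbolic setting is the step on which I would concentrate the effort.
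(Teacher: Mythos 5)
Your proposal is correct and takes essentially the same route as the paper, whose entire proof of this theorem is the one-line observation that it follows from the preceding Proposition combined with Corollary \ref{anosovUBD}. You merely fill in the details the paper leaves implicit (the reverse implication and the sign/nonvanishing of the center exponent), and the circularity you worry about at the end is not an issue: the exponent identity of \cite{MT} is stated for conservative partially hyperbolic diffeomorphisms of $\mathbb{T}^3$ without any Anosov assumption, which is exactly how the paper already invokes it in the proof of Theorem B.
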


In the above theorem, if $f$ is $C^{\infty}$ then, the conjugacy is also $C^{\infty}.$ See \cite{VARAO2} for the argument.


\begin{thebibliography}{RRRRRR}
%


\bibitem{BBI}
M. Brin, D. Burago, S. Ivanov,
\newblock Dynamical  Coherence of Partially Hyperbolic Diffeomorfism of the $3-$Torus.
\newblock {\em Journal of Modern Dynamical Systems and Applications,} vol.3, No. 1, 1--11, 2009.


\bibitem{FRANKS}
J.Franks,
\newblock Anosov diffeomorphisms on tori.
\newblock {\em Transactions of the American Mathematical Society,} 145: 117--124, 1969.

\bibitem{Go}
A. Gogolev,
\newblock  How typical are pathological foliations in partially hyperbolic dynamics:
an example.
{\em Israel J. Math,} 187(2012),  493--507.

\bibitem{Go2}
A. Gogolev,
\newblock Bootstrap for local rigidity of Anosov automorphisms of the 3-torus Preprint.

\bibitem{GoGu}
A. Gogolev, M. Guysinsky,
\newblock $C^1-$ differentiable conjugacy on three dimensional torus.
{\em DCDS-A,} 22(2008), no. 1/2,  183--200.

\bibitem{H}
A. Hammerlindl,
\newblock Leaf conjugacies on the torus.
 {\em Ergodic Theory and
Dynamical Systems,} 33 (2013), no. 3, 896--933.


\bibitem{CHINESES}
H.-Y. Hu, Y.-X. Hua, W.-S.Wu,
\newblock Unstable entropies and varioational principle for partially hyperbolic diffeomorphisms.
{\em Advances in Mathematics,} 321(2017), 31--68.


\bibitem{LLAVE}
R. de la LLave,
\newblock Smooth conjugacy and s-r-b measures for uniformly and non-uniformly systems.
{\em Comm. Math. Phys.,} 150(2):289--320, 1992.



\bibitem{LEDRAPPIER}
F. Ledrappier,
\newblock Propri\`{e}t\`{e}s ergodiques des mesures de Sina\"{i}.
\newblock {\em Publ. Math. I.H.E.S,} 59: 163--188, 1984.


\bibitem{MANNING}
A. Manning,
\newblock There are no new Anosov diffeomorphism on tori.
\newblock {\em Amer. J. Math,} 96: 422--429, 1974.

\bibitem{MT}
F. Micena, A. Tahzibi,
\newblock Regularity of foliation and Lyapunov exponents of partially hyperbolic dynamics on $3-$torus.
\newblock {\em Nonlinearity,} 26(4): 1071--1082, 2013.

\bibitem{poletti}
M. Poletti,
\newblock Geometric growth for Anosov maps of the three torus.
\newblock {\em Bulletin of the Brazilian Mathematical society, (2018) New Series, https://doi.org/10.1007/s00574-018-0079-7}

\bibitem{POLL}
M. Pollicott,
\newblock Lectures on ergodic theory and Pesin theory on compact manifolds.
\newblock {\em Cambridge
Univ. Press,} 1993 MR1215938 (94k:58080)

\bibitem{Ro}
A. Rokhlin,
\newblock  Lectures on the entropy theory of transformations with invariant measure.
\newblock {\em Uspehi Mat. Nauk.}22(5(137)):3--56, 1967.

\bibitem{SY}
R. Saghin, J. Yang,
\newblock Lyapunov exponents and rigidity of Anosov automorphisms and skew products.
\newblock {\em Arxiv 1802.08266, Dynamical Systems.}


\bibitem{VARAO}
R. Var\~{a}o
\newblock Center foliation: absolute continuty, disintegration and rigidity.
\newblock {\em Erg. Th. Dynam. Sys.,}36(1), 256--275, 2016.

\bibitem{VARAO2}
R. Var\~{a}o
\newblock Rigidity for partially hyperbolic diffeomorphisms.
\newblock {\em Erg. Th. Dynam. Sys.,} online.






\end{thebibliography}
\end{document}